\documentclass[10pt]{amsart}
\usepackage[normalem]{ulem}

\usepackage{amsmath}
\usepackage{amsfonts}
\usepackage{amssymb}
\usepackage{mathrsfs}
\usepackage{amsthm}
\usepackage{amsmath,amssymb,amsfonts,amsthm,color,latexsym}
\usepackage{xcolor}
\usepackage{float}

\theoremstyle{plain}

\newcommand{\R}{\mathbb{R}}
\newcommand{\C}{\mathbb{C}}

\newcommand{\W}{\mathcal{W}}
\newcommand{\F}{\mathcal{F}}

 %letra fresca, lembrando o caligráfico.

\newcommand{\z}{\overline{z}}

\newcommand{\PP}{\mathbb{P}}
\newcommand{\sym}{\mathrm{Sym}}

\newcommand{\sing}{\mathrm{Sing}}
\newcommand{\cs}{\text{CS}}

\setlength{\columnseprule}{1 pt}
\newtheorem{maintheorem}{Theorem}

\newtheorem{secondtheorem}{Theorem}

\newtheorem{thirdtheorem}{Theorem}

\newtheorem{theorem}{Theorem}[section]

\newtheorem{lemma}[theorem]{Lemma}
\newtheorem{proposition}[theorem]{Proposition}

\theoremstyle{definition}
\newtheorem{definition}{Definition}[section]
\newtheorem{example}{Example}[section]

\usepackage{graphicx}
%\usepackage{lineno} % para numerar las lineas.
%\usepackage[pagebackref]{hyperref}
%\usepackage[alf]{abntex2cite}	% Citações padrão ABNT
%\citebrackets[]
%\date{}

\begin{document}
%\linenumbers
\title[Levi-flat hypersurfaces and holomorphic Webs]{On real-analytic Levi-flat hypersurfaces and holomorphic Webs}

\author{Ayane Adelina da Silva}
\address[Ayane Adelina da Silva]{Departamento de Matem\'atica, Universidade Federal de Minas Gerais, UFMG}
\curraddr{Av. Ant\^onio Carlos 6627, 31270-901, Belo Horizonte-MG, Brazil.}
\email{ayaneadelina@gmail.com}

\author{Arturo Fern\'andez-P\'erez}
\address[Arturo Fern\'andez-P\'erez]{Departamento de Matem\'atica, Universidade Federal de Minas Gerais, UFMG}
\curraddr{Av. Ant\^onio Carlos 6627, 31270-901, Belo Horizonte-MG, Brazil.}
\email{fernandez@ufmg.br}
\thanks{The second-named author is partially supported by CNPq-Brazil Grant Number 302790/2019-5 and  Pronex-Faperj.}
%\thanks{This work is supported by CNPq Brazil grant number 427388/2016-3 and Concytec Per\'u.}
\subjclass[2010]{Primary 32V40 - 32S65}

\keywords{Holomorphic webs, holomorphic foliations, real-analytic Levi-flat hypersurfaces}

\begin{abstract}
We investigate holomorphic webs tangent to real-analytic Levi-flat 
hypersurfaces on compact complex surfaces. Under certain conditions, we prove that a holomorphic web tangent to a real-analytic Levi-flat hypersurface admits a multiple-valued meromorphic first integral. We also prove that the Levi foliation of a Levi-flat hypersurface induced by an irreducible real-analytic curve in the Grassmannian $G(n+1,n)$ extends to an algebraic web on the complex projective space. 
\end{abstract}

\maketitle
%\tableofcontents
\section{Introduction}
\par The study of webs began in 1926-27 by W. Blaschke and G. Thomsen, see for instance \cite{Bol} and \cite{Thomsen}, and since then, webs theory has received important contributions from mathematicians such as Akivis-Shelekhov \cite{Akivis}, Goldberg \cite{goldberg}, Chern \cite{chern}, Griffiths \cite{griffiths}, Nakai \cite{nakai}, Tr\'epreau \cite{Trepreau}, Cavalier-Lehmann \cite{Lehmann}, \cite{cavalier} and Pereira-Pirio  \cite{JV} among others. The reader who wishes to get an outline of the history of Web Geometry may consult the book \cite[Appendix p.155]{JV}. Singular real-analytic Levi-flat hypersurfaces in complex manifolds arise naturally as invariant sets of holomorphic foliations and implicit differential equations. Such hypersurfaces were studied by Burns-Gong \cite{burns}, they have initiated a systematic study of singular real-analytic Levi-flat hypersurfaces with quadratic singularities and also have proved the existence of normal forms of quadratic Levi-flat hypersurfaces. 
\par This paper is devoted to the study of holomorphic webs tangent to
singular real-analytic Levi-flat hypersurfaces on compact complex manifolds. Initially, we will restrict our attention to real-analytic Levi-flat hypersurfaces and holomorphic webs on compact complex surfaces (complex manifolds of complex dimension two), and in the last part of the paper, we consider the \textit{holomorphic extension problem} of the Levi foliation of a real-analytic Levi-flat on the complex projective space $\mathbb{P}^n$, $n\geq 2$. Our goal is to obtain properties on the singular set of the Levi-flat hypersurface and as a consequence some properties of the tangent web, for instance, such a web admitting a multiple-valued meromorphic first integral, see Theorems \ref{first} and \ref{second2}. 
\par The first motivation to study webs tangent to Levi-flat hypersurfaces is the known result for foliations (1-webs) tangent to Levi-flat hypersurfaces established by Cerveau-Lins Neto in \cite[Theorem 1]{alcides}. They proved that any germ of codimension one holomorphic foliation tangent to Levi-flat hypersurface admits a local meromorphic first integral, that is, there exists a local meromorphic (possibly holomorphic) function $h$ such that the leaves of the foliation are contained in the level sets of $h$. Moreover, they have conjectured that for webs something similar occurs. In the case of local holomorphic webs, under some conditions, a result in favor of this conjecture was given in \cite[Theorem 1]{arturo}.
\par The second motivation is the \textit{holomorphic extension problem} of the Levi foliation of a Levi-flat hypersurface. In the local context, this problem has been partially solved in \cite[Theorem 1]{generic} and recently generalized by Shafikov-Sukhov \cite[Theorem 1.1]{Russians}. They proved that if a real-analytic Levi-flat hypersurface $M$ has a non-dicritical singularity at $p$ or if $M$ is a real-algebraic hypersurface, then the Levi foliation of $M$ extends to a holomorphic web admitting a multiple-valued meromorphic first integral. In Section \ref{projective-space}, we consider this problem for a special family of Levi-flat hypersurfaces, those defined by real-analytic curves in the Grassmannian $G(n+1,n)$, see for instance Theorem \ref{third}. 
\par The paper is organized as follows: in Section \ref{foli}, we define the concept of codimension one holomorphic foliations, first integrals, and the Camacho-Sad index of a foliation with respect to an invariant curve. Section \ref{Levi} is devoted to the definition of real analytic Levi-flat hypersurfaces in complex manifolds, Segre varieties associated with Levi-flat hypersurfaces, and dicritical singularity of a Levi-flat hypersurface. In Section \ref{webs-theory}, we define global webs as the closure of meromorphic multi-sections. We also define the concept de dicriticity of a web, this definition is from Cavalier-Lehmann \cite[D\'efinition 3.9]{Lehmann} and the notion of meromorphic first integral for webs given by Pan \cite[D\'efinition 4.1]{Pan}. We study the lifting of Levi-flat hypersurfaces to the projectivised cotangent bundle in Section \ref{levantamento}. In Sections \ref{primer} and \ref{segundo}, we will impose some conditions to holomorphic webs tangent to Levi-flat hypersurfaces in compact complex surfaces admit multiple-valued meromorphic first integrals, see for instance Theorems \ref{first} and \ref{second2}.
Some examples are presented in these sections. 
Finally, in Section \ref{projective-space}, we will solve the holomorphic extension problem of the Levi foliation of a Levi-flat hypersurface induced by an irreducible real-analytic curve in the Grassmannian $G(n+1,n)$. In this situation, we prove that the Levi foliation extends to an algebraic web in $\mathbb{P}^n$. 
\par This article is a partial compilation of the results of the Ph.D. Thesis of the first author (cf. \cite{ayane}) written under the supervision of the second author in the Department of Mathematics of the Federal University of Minas Gerais - Brazil.\\

\noindent {\bf Acknowledgments.}
 The authors are grateful to Federal University of Minas Gerais - Brazil and CAPES - Brazil for the support. 

\section{Holomorphic foliations}\label{foli}
\par Let $\{U_j\}_{j\in I}$ be an open covering of $X$. A \textit{codimension one holomorphic foliation} $\F$ on $X$ can be described by a collection of holomorphic 1-forms $\omega_j\in\Omega^{1}_{X}(U_{j})$ such that $\omega_j\wedge d\omega_j=0$ for all $j\in I$ and
\begin{equation*}
\omega_i=g_{ij}\omega_j\,\,\,\,\,\,\,\,\,\text{on}\,\,\,U_i\cap U_j,\,\,\,\,\,\,\,\,\,\,\,\,\,\,g_{ij}\in\mathcal{O}^{*}_{X}(U_i\cap U_j).
\end{equation*}
The cocycle $\{g_{ij}\}$ defines a line bundle $N_{\F}$ on $X$, called \textit{normal bundle} of $\F$.  The \textit{singular set} $\sing(\F)$ of $\F$ is the complex subvariety of $X$ defined by
$$\sing(\F)\cap U_{j}=\text{zeros of}\,\,\,\omega_{j},\,\,\,\,\,\,\,\,\,\,\,\,\,\,\,\forall j\in I.$$
We always consider saturated foliations, i.e., the zero set of every $\omega_j$ has codimension at least two. Therefore, by definition, $\sing(\F)$ has codimension at least two. A point $q\not\in\sing(\F)$ is said to be \textit{regular} of $\F$. A complex hypersurface $H\subset X$ is said to be \textit{invariant} by $\F$ if all connected components of $H$ are leaves of $\F$. 
We say that $\mathcal{F}$ admits a \textit{meromorphic} (\textit{holomorphic}) first integral at $p\in X$, if there exists a neighborhood $U\subset X$ of $p$ and  
a \textit{meromorphic} (\textit{holomorphic}) function $h$ defined in $U$ such that its indeterminacy (zeros)
set is contained in $\sing(\F)\cap U$ and its level curves contain the leaves of $\mathcal{F}$ in $U$. When $X$ is a complex surface, a point singular $p\in\sing(\F)$ of a holomorphic foliation $\F$ is called \textit{dicritical} if for every neighborhood $U$ of $p$, infinitely many leaves have $p$ in their closure. Otherwise it is called \textit{non-dicritical}.
\par To continue, we state an important result on the Camacho-Sad index \cite{CS} of holomorphic foliations on complex surfaces: this index concerns the computation of the self-intersection $C\cdot C$, where $C\subset X$ is an invariant compact curve by $\mathcal{F}$. More references for index theorems we refer the reader to \cite{index} and \cite{birational}.
\subsection{Camacho-Sad index} 
Let $X$ be a complex surface and $\F$ be a holomorphic foliation on $X$.
Let us consider a \textit{separatrix} $C$ at $p\in X$, i.e., $C$ is an irreducible analytic curve invariant by $\F$. Here, \textit{invariant} means that if a point of $C$ belongs to the regular part of $\F$, then the whole leaf through this point is included in $C$.
Let $f$ be a holomorphic function on a neighborhood of $p$ such that $C =\{f=0\}$. 
 Then, according to \cite[Lemma 1.1]{suwa}, there are holomorphic functions $g$, $k$ and a holomorphic 1-form $\eta$ on a neighborhood of $p$ such that
$$g\omega=kdf+f\eta$$
and moreover $k$ and $f$ are prime, i.e., $k\neq 0$ on $C^{*}=C\setminus\{p\}$. 
The Camacho-Sad index \cite{CS} is defined as 
$$\cs_p(\F,C)=-\frac{1}{2\pi i}\int_{\partial{C}}\frac{\eta}{k},$$
where $\partial{C}=C\cap S^{3}$ and $S^3$ is a small sphere around $p$; $\partial{C}$ is oriented as  a boundary of $S^{3}\cap B^4$, with $B^4$ a ball containing $p$. 
 If $C\subset X$ is a compact complex curve invariant by $\F$, one has the formula due to Camacho-Sad. 
\begin{theorem}\cite[Camacho-Sad]{CS}\label{CS}
$$\sum_{p\in\sing(\F)\cap C}\cs_p(\F,C)=C\cdot C.$$
\end{theorem}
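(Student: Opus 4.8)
The plan is to read both sides of the identity as invariants of the normal line bundle $N_C=\mathcal{O}_X(C)|_C$: the right-hand side is its degree, $C\cdot C=\deg N_C$ (computed on the normalization when $C$ is singular), while the left-hand side will be recovered as minus the total residue of a meromorphic connection on $N_C$ built directly from $\F$. The equality then drops out of the residue theorem for meromorphic connections on the compact curve $C$. This is the connection-theoretic proof in the spirit of \cite{suwa}, and the invariance of $C$ is used in an essential way.

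First I would fix a finite open cover $\{U_i\}$ of a neighbourhood of $C$ on which $\F$ is given by $\omega_i$ with $\omega_i=g_{ij}\omega_j$, and with $C\cap U_i=\{f_i=0\}$ for a holomorphic $f_i$, so that $f_i=t_{ij}f_j$ with $t_{ij}\in\mathcal{O}^{*}_X(U_i\cap U_j)$ the transition cocycle of $\mathcal{O}_X(C)$. By the lemma of Suwa recalled above, on each $U_i$ we write $g_i\omega_i=k_i\,df_i+f_i\eta_i$ with $k_i$ prime to $f_i$ and $\eta_i$ holomorphic; by definition $\cs_p(\F,C)=-\tfrac{1}{2\pi i}\int_{\partial C}\eta_i/k_i$ for $p\in U_i$.

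The key step is to show that the $1$-forms $\lambda_i:=(\eta_i/k_i)|_C$ patch to a global meromorphic connection on $N_C$ that is holomorphic on $C\setminus\sing(\F)$. Substituting $\omega_i=g_{ij}\omega_j$ and $f_i=t_{ij}f_j$ into the two Suwa identities on $U_i\cap U_j$ produces an identity of the form $\bigl[k_it_{ij}-\tfrac{g_ig_{ij}}{g_j}k_j\bigr]df_j+f_j(\cdots)=0$; since $C$ is reduced, $df_j$ is prime to $f_j$ at a generic point of $C$, which forces $t_{ij}=(g_ig_{ij}k_j)/(g_jk_i)$ on $C$. Feeding this back and restricting the remaining identity to $C$ — where $df_j$ pulls back to $0$ because $f_j\equiv 0$ on $C$ — yields exactly $\lambda_i-\lambda_j=-\,dt_{ij}/t_{ij}$ on $C\cap U_i\cap U_j$, the cocycle condition for a connection on $\mathcal{O}_X(C)|_C$. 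As $k_i$ is nonvanishing on $C\setminus\sing(\F)$, each $\lambda_i$ is holomorphic there, and at $p\in\sing(\F)\cap C$ the residue is $\mathrm{Res}_p(\lambda_i)=\tfrac{1}{2\pi i}\int_{\partial C}\eta_i/k_i=-\cs_p(\F,C)$, straight from the definition of the Camacho--Sad index.

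Finally, the residue theorem for meromorphic connections on the compact Riemann surface $C$ (applied on the normalization if $C$ is singular) gives $\sum_{p}\mathrm{Res}_p(\lambda)=-\deg N_C=-\,C\cdot C$; combining with $\mathrm{Res}_p(\lambda)=-\cs_p(\F,C)$ and summing over $p\in\sing(\F)\cap C$ gives $\sum_p\cs_p(\F,C)=C\cdot C$. The main obstacle is the gluing step: one must check that the purely local representations coming from Suwa's lemma really assemble into a connection on $N_C$ itself — and not on some twist of it by $N_{\F}$ — which is precisely what the overlap computation above accomplishes, and one must treat the points where $C$ is singular honestly, passing to the normalization so that both $\partial C$ and the residue sum are unambiguous. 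It is worth noting that the construction is intrinsic: on $C\setminus\sing(\F)$ the collection $\{\lambda_i\}$ is the holonomy (Bott) connection of the leaf $C$ on its normal bundle, and the theorem is the statement that the degree obstruction of this connection localizes at $\sing(\F)\cap C$ with local contribution $\cs_p(\F,C)$.
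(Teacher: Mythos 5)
Your argument is correct, and it is worth noting at the outset that the paper itself offers no proof of this statement: Theorem \ref{CS} is quoted from Camacho--Sad \cite{CS}, so there is nothing internal to compare against. What you have written is the standard Lehmann--Suwa localization proof: realize the Camacho--Sad indices as (minus) the residues of a meromorphic connection on $N_C=\mathcal{O}_X(C)|_C$ built from Suwa's decomposition $g_i\omega_i=k_i\,df_i+f_i\eta_i$, and invoke the residue theorem $\sum_p\mathrm{Res}_p(\nabla)=\deg N_C$. This differs from the original Camacho--Sad argument, which works with a smooth ($C^\infty$) partial connection adapted to the foliation away from the singularities and a Chern--Weil computation, and which in its original form assumes $C$ smooth; your meromorphic-connection route is the one that extends cleanly to singular invariant curves, which is exactly the generality the paper needs. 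Your overlap computation is right: substituting $\omega_i=g_{ij}\omega_j$, $f_i=t_{ij}f_j$ into the two Suwa identities gives $\bigl[k_it_{ij}-\tfrac{g_ig_{ij}}{g_j}k_j\bigr]df_j=f_j\bigl[\tfrac{g_ig_{ij}}{g_j}\eta_j-k_i\,dt_{ij}-t_{ij}\eta_i\bigr]$, reducedness of $C$ forces $f_j$ to divide the bracket on the left, and after dividing by $k_it_{ij}$ and pulling back to $C$ (where $df_j$ dies) one gets $\lambda_i-\lambda_j=-dt_{ij}/t_{ij}$, so $\{-\lambda_i\}$ is a connection on $N_C$ and the signs close up to give $\sum_p\cs_p(\F,C)=C\cdot C$. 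Three points deserve to be made explicit rather than implicit: (i) that $k_i$ can be chosen non-vanishing on $C\cap U_i\setminus\sing(\F)$ is part of the precise statement of Suwa's lemma, not automatic from ``$k_i$ prime to $f_i$''; (ii) when $C$ is singular, the identity $\deg(\nu^*\mathcal{O}_X(C))=C\cdot C$ on the normalization $\nu:\widetilde C\to C$ is itself a (standard but nontrivial) fact that the residue theorem is being applied to; and (iii) at a point where $C$ has several local branches, the ``residue'' is the sum over the branches of $\widetilde C$ above it, which matches the contour $\partial C=C\cap S^3$ in the definition of $\cs_p$. None of these is a gap, only bookkeeping to record.
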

\section{Levi-flat hypersurfaces}\label{Levi}
Let $X$ be a complex manifold of complex dimension $n\geq 2$, a  closed  subset $M\subset X$ is a \textit{real-analytic subvariety}  if for every $p\in M$, there are real-analytic functions with real values $\varphi_1,\ldots,\varphi_k$ defined in a neighborhood $U\subset X$ of $p$, such that $M\cap U$ is equal to the set where all $\varphi_1,\ldots,\varphi_k$ vanish. A complex subvariety is precisely the same notion, considering holomorphic functions instead of real analytic functions.  We say that a real-analytic subvariety $M$ is \textit{irreducible} if whenever we write $M=M_1\cup M_2$ for two subvarieties $M_1$ and $M_2$ of $X$, then either $M_1=M$ or $M_2=M$. If $M$ is irreducible, it has a well-defined dimension $\dim_\R M$. Let $M_{reg}$ denote its \textit{regular part}, i.e., the subset of points near which $M$ is a real-analytic submanifold of dimension equal to $\dim_\R M$. A \emph{real-analytic hypersurface} is a real-analytic subvariety of real codimension one in $X$.
\par If $M \subset X$ is a real-analytic  submanifold of real codimension one, for each   $p \in M$, there is a unique complex hyperplane $L_{p}$ contained in the tangent space $T_{p}M \subset T_{p}X$. Then, it defines a real-analytic distribution $p \mapsto L_{p}$ of complex hyperplanes in $T M$.  When this distribution is \textit{integrable} in the sense of Frobenius, we say that $M$ is a {\em Levi-flat}. In this case, $M$ is foliated by   immersed complex manifolds of complex dimension $n-1$. This foliation, denoted by $\mathcal{L}_M$, is known as  {\em Levi foliation}. If $M$ is singular, let us denote by $\sing(M)$ the \textit{singular} locus of $M$, that is, the set of points where $M$ fails to be a real-analytic submanifold. In this case, $M$ is said to be \emph{Levi-flat} if its regular part $M_{reg}$ is Levi-flat.
\par A normal form for regular points of $M$ was given by
E. Cartan \cite[Theorem IV]{cartan}: at each $p \in M_{reg}$, there are holomorphic coordinates $(z_{1},\ldots,z_{n})$ in a neighborhood $U$ of $p$
such that
\begin{equation}\label{formalocal-hlf}
M_{reg} \cap U = \{\Im m(z_{n}) = 0\}.
\end{equation}
 As a consequence, the leaves of
$\mathcal{L}_M$
 have local equations $z_{n} = c$, for $c \in \mathbb{R}$. Cartan's local trivialization \cite{cartan} allows the extension of  $\mathcal{L}_M$  to a non-singular holomorphic foliation  in a neighborhood of  $M_{reg}$ in $X$, which is unique as a germ around $M_{reg}$. In general, it is not possible to extend    $\mathcal{L}_M$   to a singular holomorphic foliation in a neighborhood of $M$, for instance, there exists examples of Levi-flat hypersurfaces where its Levi foliation extends to 
holomorphic webs, see Brunella \cite{brunella}. 
\par When there is a singular codimension one holomorphic foliation $\F$ in the ambient space $X$ coinciding with the Levi foliation $\mathcal{L}_M$ on $M_{reg}$, we say either that $M$ is \emph{invariant} by $\F$ or that $\F$ is \emph{tangent} to $M$. Germs of codimension one holomorphic foliations tangent to germs of real-analytic Levi-flat hypersurfaces was studied by Cerveau-Lins Neto \cite[Theorem 1]{alcides}, they proved that a such foliation admits a non-constant meromorphic (possibly holomorphic) first integral.
\par In order to distinguish singularities of $M$ we have the following definition. 
\begin{definition}
 A singular point $p\in \sing(M)$ is called \textit{dicritical} if, for every neighborhood $U$ of $p$, infinitely many leaves of the Levi-foliation on $M_{reg}\cap U$ have $p$ in their closure. 
 \end{definition}
This definition is analogous to definition of dicritical singularities of foliations. To continue, we establish the relation between dicritical points and degenerate Segre points of $M$.
\subsection{Segre varieties}
Assume that $M$ is defined by $\{F=0\}$, where $F$ is real-analytic function in a neighborhood $U\subset\mathbb{C}^n$. Without loss of generality we may assume that $U$ is a polydisc centered at the origin of $\mathbb{C}^n$. Then $F$ admits a Taylor expansion
\begin{equation}\label{defi}
F(z,\bar{z})=\sum_{\mu,\nu}F_{\mu\nu}z^{\mu}\bar{z}^{\nu}
\end{equation}
where $\bar{F}_{\mu\nu}=F_{\nu\mu}$, $\mu=(\mu_{1},\ldots,\mu_{n})$, $\nu=(\nu_{1},\ldots,\nu_{n})$, 
$z^{\mu}=z_{1}^{\mu_{1}}\ldots z_{n}^{\mu_{n}}$ and $\bar{z}^{\nu}=\bar{z}_{1}^{\nu_{1}}\ldots \bar{z}_{n}^{\nu_{n}}$. The complexification of $F$ is given by 
\begin{equation}\label{defi2}
F_{\C}(z,w)=\sum_{\mu,\nu}F_{\mu\nu}z^{\mu}w^{\nu},
\end{equation}
that is, we replace the variable $\bar{z}$ with an independent variable $w$. We assume that the neighborhood $U$ is chosen so small that the series (\ref{defi2}) converges for all $z,w\in U$. Therefore, $F_{\C}(z,\bar{w})$ is holomorphic in $z\in U$ and antiholomorphic in $w\in U$. Fix $p\in U$, the \textit{Segre variety} associated to $M$ at $p$ is the complex hypersurface  defined by
\begin{equation}\label{segre-variety}
Q_{p}:=\{z\in U:F_{\C}(z,\bar{p})=0\}.
\end{equation}
It is easy to check that Segre varieties are defined invariantly with respect to the choice of the defining function $F(z,\bar{z})$ of $M$. Now assume that $M$ is Levi-flat and denote by $L_{p}$ the leaf of $\mathcal{L}_M$ through $p\in M_{reg}$, an interesting property is that $L_p$ is contained in the unique irreducible component $S_p$ of $Q_p$. For more details about Segre varieties we refer the reader to \cite[Corollary 2.2]{Russians}. 
\par A singular point $p$ is called \textit{Segre degenerate} if $\dim Q_p=n$. In \cite{pinchuk}, Pinchuk-Shafikov-Sukhov proved the following characterization of dicritical singularities of a Levi-flat hypersurface.
\begin{theorem}
Let $M$ be a germ of an irreducible germ of a real-analytic Levi-flat hypersurface at $p\in\C^n$ and $p\in\overline{M_{reg}}$. Then $p$ is a dicritical point if and only if it is Segre degenerate.
\end{theorem}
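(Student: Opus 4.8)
The plan is to reformulate dicriticity in terms of the Segre varieties and then treat the two implications separately. One works with a reduced real-analytic defining function $F$ for the germ $(M,p)$ on a small polydisc $U$, taking $p=0$, with complexification $F_{\C}(z,w)$ and Segre varieties $Q_{q}=\{z\in U:F_{\C}(z,\bar q)=0\}$; the Hermitian symmetry $F_{\C}(a,\bar b)=\overline{F_{\C}(b,\bar a)}$ yields the reciprocity $p\in Q_{q}\iff q\in Q_{p}$. The preliminary step is to record, from Cartan's normal form \eqref{formalocal-hlf} together with the structure of Segre varieties of Levi-flats (\cite[Corollary 2.2]{Russians}), that for $q\in M_{reg}$ one has $\overline{L_{q}}=S_{q}\subset M$ and $S_{q}\cap M_{reg}=L_{q}$, where $S_{q}$ is the irreducible component of $Q_{q}$ through $q$. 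In particular distinct leaves have distinct (irreducible) closures $S_{q}$, which therefore pairwise intersect in complex codimension $\geq 1$, and a leaf $L$ accumulates at $p$ precisely when $p\in S_{q}$ for $q\in L$.

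For the direction ``not Segre degenerate $\Rightarrow$ non-dicritical'' I would use a counting argument. If $\dim Q_{p}=n-1$ then $Q_{p}$ is a genuine hypersurface, so the germ $(Q_{p},p)$ has finitely many irreducible components $C_{1},\dots,C_{m}$. If $L$ is a leaf with $p\in\overline{L}$, then for every $z\in L$ we get $p\in\overline{L}=S_{z}\subset Q_{z}$, hence $z\in Q_{p}$ by reciprocity; thus $L\subset Q_{p}$ and, $Q_{p}$ being closed, $\overline{L}\subset Q_{p}$. Being pure of dimension $n-1$, the germ $(\overline{L},p)$ is a union of some of the $C_{i}$; and since two distinct leaf closures meet in complex codimension $\geq 1$, the germs at $p$ of the various leaf closures through $p$ involve pairwise disjoint, non-empty subsets of $\{C_{1},\dots,C_{m}\}$. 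Hence at most $m$ leaves accumulate at $p$, and $p$ is non-dicritical.

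For the direction ``Segre degenerate $\Rightarrow$ dicritical'' the engine is a blow-up. Here $F_{\C}(z,\bar p)\equiv 0$, i.e. $F$ has no pure monomial $F_{\mu 0}z^{\mu}$ (equivalently, by reciprocity, $p\in Q_{q}$ for every $q$). Blow up $\C^{n}$ at $p$, $\pi\colon\widetilde{U}\to U$, with exceptional divisor $E\cong\PP^{n-1}$; in a standard chart $z=(z_{1},z_{1}t_{2},\dots,z_{1}t_{n})$ the absence of pure terms forces $F\circ\pi=|z_{1}|^{2}\,G$ for some real-analytic $G$, so the strict transform $\widetilde{M}$ of $M$ meets $E$ in $\{z_{1}=0,\ G|_{z_{1}=0}=0\}$ — the zero set on $E\cong\PP^{n-1}$ of the first non-trivial homogeneous contribution of $F$ (the Hermitian form $\sum_{i,j}F_{e_{i}e_{j}}t_{i}\bar t_{j}$ when $\mathrm{mult}_{p}F=2$). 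This is a proper subset of $E$ (after cancelling powers of $|z_{1}|^{2}$ the remaining leading term is not identically zero on $E$) and is non-empty, since $p\in\overline{M_{reg}}$ forces $\widetilde{M}$ to meet $E$ (a definite Hermitian leading part being excluded, as it would make $M=\{0\}$ near $p$). Thus $\widetilde{M}\cap E$ has real dimension $\leq 2n-3$, strictly less than the dimension $2n-2$ of a leaf of the lifted Levi foliation $\widetilde{\mathcal{L}}$; hence no leaf of $\widetilde{\mathcal{L}}$ lies inside $\widetilde{M}\cap E$, and through a dense open subset of $\widetilde{M}\cap E$ the leaves cross $E$ transversally. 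Therefore a real one-parameter family of pairwise distinct leaves of $\widetilde{\mathcal{L}}$ meets $E$; pushing them down by $\pi$ (biholomorphic off $E$, so at worst finite-to-one on leaves) yields infinitely many pairwise distinct leaves of $\mathcal{L}_{M}$ with $p=\pi(E)$ in their closure, so $p$ is dicritical.

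I expect this last implication to be the main obstacle in full generality. After a single blow-up the leading homogeneous part of $F$ can vanish (when $\mathrm{mult}_{p}F>2$), and even when $\widetilde{M}\cap E$ is proper the strict transform $\widetilde{M}$ may fail to be Levi-flat-regular along a dense open subset of it (for instance $\widetilde{M}\cap E$ could be contained in $\sing(\widetilde{M})$), so that one cannot yet exhibit the crossing leaves. One then has to iterate the blow-up (or invoke a resolution adapted to $M$ and its Levi foliation), verifying at each stage that the new exceptional locus is either non-invariant — so crossing leaves appear — or is itself a leaf closure approached by infinitely many nearby leaves, and that in either case these blow down to infinitely many distinct leaves of $\mathcal{L}_{M}$ accumulating at $p$. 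This bookkeeping is the technical core of the statement and is carried out in \cite{pinchuk}.
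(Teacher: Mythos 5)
The first thing to note is that the paper does not prove this statement at all: it is quoted verbatim from Pinchuk--Shafikov--Sukhov \cite{pinchuk}, so there is no internal proof to compare yours against, and your proposal has to stand on its own. Your first implication (not Segre degenerate $\Rightarrow$ not dicritical) is essentially the standard argument and is sound: reciprocity $p\in Q_z\Leftrightarrow z\in Q_p$ forces every leaf accumulating at $p$ into $Q_p$, and when $Q_p$ is a genuine hypersurface the finitely many irreducible components of the germ $(Q_p,p)$ bound the number of such leaves. The one step you should justify rather than assert is why a single irreducible hypersurface $S\subset M$ cannot be the closure of infinitely many distinct leaves: your claim $S_q\cap M_{reg}=L_q$ is stronger than what \cite[Corollary 2.2]{Russians} literally provides (that reference gives $L_q\subset S_q\subset M$); what you actually need, and what does hold on a fixed small polydisc, is that $S\cap M_{reg}$ is a semianalytic set with finitely many connected components, each contained in a single leaf.

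The converse implication is where the proof genuinely fails, and your own caveats identify why. After one blow-up you cannot guarantee that $\widetilde{M}\cap E$ is a nonempty set of real codimension one in $E$ along which the lifted Levi foliation is defined and transverse to $E$: the leading term surviving after cancelling powers of $|z_1|^2$ may vanish identically on $E$, and even when it does not, $\widetilde{M}\cap E$ may lie entirely inside $\sing(\widetilde{M})$, where there is no Levi foliation and hence no ``crossing leaves'' to exhibit; moreover an invariant exceptional component does not obviously blow down to infinitely many distinct leaves of $\mathcal{L}_M$ through $p$. You propose to repair this by iterating the blow-up, but you neither specify the iteration nor prove it terminates, and you close by citing \cite{pinchuk} for ``the technical core'' --- that reference is precisely the proof of the theorem, so the hard direction is being assumed, not proved. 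The genuine difficulty, which neither reciprocity nor a naive resolution settles, is that Segre degeneracy only gives $p\in Q_w$ for every $w$, whereas dicriticity requires $p$ to lie in the closure of the actual leaf $L_w$, i.e.\ in the particular component $S_w$ of $Q_w$, for infinitely many $w\in M_{reg}$; bridging that gap is the substance of \cite{pinchuk} and is absent from your argument.
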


\section{Codimension one holomorphic webs}\label{webs-theory}
In this section, we follow the references of \cite{cavalier} and \cite{JV}.
A germ at $0\in\mathbb{C}^{n}$, $n\geq 2$, of codimension one $d$-web $\mathcal{W}$ is an equivalence class $[\omega]$ of germs of $d$-symmetric 1-forms, these class are sections of $\sym^{d}\Omega^{1}(\mathbb{C}^{n},0)$, modulo multiplication by $\mathcal{O}^{*}(\mathbb{C}^{n},0)$ such that a suitable representative $\omega$ defined in a connected neighborhood $U$ of the origin satisfies the following conditions:
\begin{enumerate}
\item[(a)] the zero set of $\omega$ has codimension at least two; 
\item[(b)] $\omega$ is a homogeneous polynomial of degree $d$ in the ring $\mathcal{O}_{n}[dx_{1},\ldots,dx_{n}]$, and square-free.
\item[(c)] (Brill's condition) For a generic $\zeta\in U$, $\omega(\zeta)$ is a product of $d$ linear forms.
\item[(d)] (Frobenius's condition) For a generic $\zeta\in U$, the germ of $\omega$ at $\zeta$ is the product of $d$ germs of integrable 1-forms.
\end{enumerate}
\par Both conditions $(c)$ and $(d)$ are automatic for germs at $0\in\mathbb{C}^{2}$ of webs and non-trivial for germs at $0\in\mathbb{C}^{n}$ when $n\geq 3$. 

\subsection{Global webs}\label{webs_theory}
A global holomorphic $d$-web $\W$ on a complex manifold $X$ is given by an open covering $\mathcal{U}=\{U_j\}_{j\in J}$ of $X$ and $d$-symmetric 1-forms $\omega_j\in \sym^{d}\Omega^{1}_X(U_j)$ subject to the conditions:
\begin{enumerate}
\item for each non-empty intersection $U_i\cap U_j$ of elements of $\mathcal{U}$ there exists a non-vanishing function $g_{ij}\in\mathcal{O}^{*}(U_i\cap U_j)$ such that $$\omega_i=g_{ij}\omega_j;$$
\item for every $U_i\in\mathcal{U}$ and every $\zeta\in U_i$ the germification of $\omega_i$ at $\zeta$ satisfies the conditions $(a)$, $(b)$, $(c)$ and $(d)$, in other words, the germ of $\omega_i$ at $\zeta$ is a representative of a germ of a singular web. 
\end{enumerate}
\par The transition functions $g_{ij}$ determine a line-bundle $\mathcal{N}$ over $X$ and the $d$-symmetric 1-forms $\{\omega_j\}$ patch together to form a global section of $\sym^{d}\Omega^{1}_{X}\otimes\mathcal{N}$. The line-bundle $\mathcal{N}$ will be called the \textit{normal bundle} of $\W$. Two global sections $\omega,\omega'\in H^{0}(X,\sym^{d}\Omega^{1}_{X}\otimes\mathcal{N})$ determine the same web if and only if they differ by the multiplication by an element $g\in H^{0}(X,\mathcal{O}_X^{*})$. If $X$ is compact, then a global $d$-web is nothing more than an element of $\mathbb{P}H^{0}(X,\sym^{d}\Omega^{1}_{X}\otimes\mathcal{N})$ for a suitable line-bundle $\mathcal{N}\in Pic(X)$, with germification of any representative at any point of $X$ satisfying conditions $(a)$, $(b)$, $(c)$ and $(d)$. 
\par When $X$ is a complex variety for which every line-bundle has non-zero meromorphic sections one can alternatively define global $d$-webs as equivalence classes $[\omega]$ of meromorphic $d$-symmetric 1-forms modulo multiplication by meromorphic functions such that at a generic point $\zeta\in X$ the germification of any representative $\omega$ satisfies the very same conditions refereed to above. The transition to the previous definition is made by observing that  meromorphic $d$-symmetric 1-form $\omega$ can be interpreted as a global holomorphic section of $\sym^{d}\Omega_X^{1}\otimes\mathcal{O}_X((\omega)_{\infty}-(\omega)_0)$ where $(\omega)_0$, respectively $(\omega)_{\infty}$, stands for the zero divisor, respectively polar divisor, of $\omega$. 
\subsection{Webs as the closure of meromorphic multi-sections}
\par Let $X$ be a complex manifold, not necessarily compact, of complex dimension $n$. Let $TX$ be the complex tangent bundle, and $\tilde{X}$ the $(2n-1)$-dimensional manifold equal to the total space of the grassmanian bundle $\pi:\mathbb{G}_{n-1}X\to X$, that is, a point of $\tilde{X}$ over a point $\zeta\in X$ is a $(n-1)$-dimensional sub-vector space of $T_\zeta X$, and is called a \textit{contact element} of $X$ at $\zeta=\pi(\tilde{\zeta})$. In the sequel, indices such as $\lambda,\mu,\ldots$ will denote integers running from $1$ to $n$, while $\alpha,\beta,\ldots$ will denote integers running from $1$ to $n-1$. For all family $v=(v_{\alpha})$ of $n-1$ vectors linearly independent in $T_\zeta X$, $[v]$ or $[v_1,\ldots,v_{n-1}]$ or $[(v_{\alpha})_{\alpha}]$ will denote the contact element generated by $v$ in $\tilde{X}_\zeta$.
\par Since any $\tilde{\zeta}$ over $\zeta$ is the kernel of a non-vanishing 1-form on $T_\zeta X$ well defined up to multiplication by a scalar unit, we can also identify $\tilde{X}$ with the total space of $\mathbb{P}T^{*}X\to X$. 
\par Let $\mathcal{T}\subset \pi^{-1}(TX)$ be the tautological vector bundle over $\tilde{X}$, whose fiber over $[v_1,\ldots,v_{n-1}]$ is the sub-vector space of $T_\zeta X$ generated by $(v_1,\ldots,v_{n-1})$, identified with the line in $T^{*}_{\zeta} X$ of all 1-forms vanishing on the $v_{\alpha}$, for all $1\leq \alpha\leq n-1$. It follows from \cite[Lemma 2.1]{cavalier} that the dual $\mathcal{L}^{*}$  of the quotient bundle $\mathcal{L}=\pi^{-1}(TX)/\mathcal{T}$ is the tautological complex line bundle of $\mathbb{P}T^{*}X$. We have two exact sequences of vector bundles
\begin{equation}\label{L}
0\to\mathcal{T}\to\pi^{-1}(TX)\to\mathcal{L}\to 0,
\end{equation}
and 
\begin{equation}\label{V}
0\to\mathcal{V}\to T\tilde{X}\to\pi^{-1}(TX)\to 0,
\end{equation}
where $\mathcal{V}$ denotes the sub-bundle of tangent vectors to $\tilde{X}$ which are ``vertical", that is, tangent to the fiber of $\pi$. By composition of the projection $T\tilde{X}\to\pi^{-1}(TX)$ of (\ref{V}) with the projection $\pi^{-1}(TX)\to\mathcal{L}$ of (\ref{L}), we obtain a canonical holomorphic 1-form on $\tilde{X}$ with coefficients in $\mathcal{L}$
$$\omega:T\tilde{X}\to\mathcal{L},$$ which is called the \textit{tautological contact form}. 
\par Let $x=(x_1,\ldots,x_n)$ be local holomorphic coordinates on an open set $U$ of $X$, and $\zeta$ a point of $U$. Let $u=(u_1,\ldots,u_n)$ denote the coordinates in $T^{*}_{\zeta} X$ with respect to the basis $(dx_1)_{\zeta},\ldots,(dx_n)_{\zeta}$; 
let $[u]=[u_1,\ldots,u_n]$ denote the point in $\mathbb{P}T^{*}_{\zeta}X$ of homogeneous coordinates $u$ with respect to the same basis in $T^{*}_{\zeta} X$, and let $p=(p_1,\ldots,p_{n-1})$ be the system of affine coordinates on the affine subspace $u_{n}\neq 0$ of $\mathbb{P}T^{*}_\zeta X$ defined by $p_{\alpha}=-\frac{u_{\alpha}}{u_n}$, for all $1\leq\alpha\leq n-1$. Then, $(x,p)=(x_1,\ldots,x_n,p_1\ldots,p_{n-1})$
is a system of local holomorphic coordinates on the open set $\tilde{U}$ of contact elements above $U$ which are not parallel to $\left(\frac{\partial}{\partial{x}_{n}}\right)_{\zeta}$: the point of coordinates $(x,p)$ is the $(n-1)$-dimensional subspace of $T^{*}_{\zeta}X$ which is the kernel of the 1-form $$\eta=(dx_n)_{\zeta}-\sum_{\alpha=1}^{n-1}p_{\alpha}(dx_{\alpha})_{\zeta},$$
it is generated by the vectors fields $$(X_{\alpha})_{\zeta}=\left(\frac{\partial}{\partial{x}_{\alpha}}\right)_{\zeta}+p_{\alpha}\left(\frac{\partial}{\partial{x}_{n}}\right)_{\zeta},\,\,\,\,\,\forall\,\,\, 1\leq\alpha\leq n-1.$$
\par Of course, the vector fields $(X_{\alpha})_{\zeta}$, for all $1\leq\alpha\leq n-1$, define a holomorphic local trivialization $\sigma_{\mathcal{T}}$ of $\mathcal{T}$ over $\tilde{U}$, while the image $\sigma_{\mathcal{T}}=\left[\frac{\partial}{\partial{x}_{n}}\right]_{\zeta}$
of $\left(\frac{\partial}{\partial{x}_{n}}\right)_{\zeta}$ by the projection $\pi^{-1}(TX)\to\mathcal{L}$ of (\ref{L}) defines a holomorphic local trivialization of $\mathcal{L}$.
According to \cite[Lemma 2.4]{cavalier}, the local form $$\eta=(dx_n)_{\zeta}-\sum_{\alpha=1}^{n-1}p_{\alpha}(dx_{\alpha})_{\zeta}$$
is a contact form on $\tilde{U}$, and defines a local holomorphic trivialization of $\mathcal{L}^{*}$. Moreover, the trivializations $\eta$ and $\sigma_{\mathcal{T}}=\left[\frac{\partial}{\partial{x}_{n}}\right]_{\zeta}$ are dual to each other and the 1-form $\eta\otimes\sigma_{\mathcal{L}}$ is equal to the restriction of $\omega$ to $\tilde{U}$. 

\par Let $W$ be a complex subvariety  of $\tilde{X}$ having pure dimension $n$. Let $\pi_W:W\to X$ be the restriction to $W$ of the projection $\pi:\tilde{X}\to X$. Denote by $W'$ the \textit{regular part} of $W$, $\Gamma_W$ the set of points $\tilde{\zeta}\in W$ where either $W$ is singular, or the differential $d\pi_W:T_{\tilde{\zeta}}W'\to T_{\pi(\tilde{\zeta})}X$ is not an isomorphism and $W_0$ the complementary subset (included into $W'$) of $\Gamma_W$ in $W$, $X_0=\pi(W_0)$. 
\par Let $d\geq 1$ be an integer.
\begin{definition}
We shall say that $W$ is a \textit{holomorphic $d$-web} on $X$ if 
\begin{enumerate}
\item the map $\pi_W:W\to X$ is surjective,
\item the restriction $\omega_W$ to $W'$ of the tautological contact form $\omega:T\tilde{X}\to\mathcal{L}$ satisfies the integrability condition, that is, given a local trivialization $\sigma_{\mathcal{L}}$ of $\mathcal{L}$, the restriction $\eta_W$ to $W'\cap\tilde{U}$ of the local contact form $\eta$ such that $\omega=\eta\otimes\sigma_{\mathcal{L}}$ satisfies the condition $\eta_W\wedge d\eta_W=0$ (this condition does not depend on the local trivialization $\sigma_{\mathcal{L}}$). We denote by $\F_W$ the codimension one holomorphic foliation induced by $\eta_W$,
\item the restriction $\pi_W:W_0\to X_0$ of $\pi_W$ to $W_0$ is a $d$-fold covering, 
\item the complex analytic set $\Gamma_W$ has complex dimension at most $n-1$, or is empty,
\item for any $\zeta\in X$, $\pi_W^{-1}(\zeta)=W\cap \pi^{-1}(\zeta)$ is an algebraic subset of degree $d$ and  dimension $0$ in $\mathbb{P}T^{*}_\zeta X$. 
\end{enumerate}
\end{definition}
\par The complex analytic set $\Gamma_W$ is called \textit{critical set} of the web, its projection $\Delta=\pi(\Gamma_W)$ the \textit{caustic} or the \textit{singular part}, and is complementary part $X_0$ the \textit{regular part} of the web. 
We have the following remarks:
\begin{enumerate}
\item The condition $(1)$ in the above definition is automatic when $X$ is compact, because of $(3)$ and $(4)$.
\item Any holomorphic $d$-web $W$ on $X$ induces a $d$-web $W|_U=W\cap \pi^{-1}(U)$ on any open set $U$ of $M$.
\end{enumerate}
\begin{definition}
The irreducible components $C$ of the subvariety $W$ in $\tilde{X}$ are called the components of the web. They all are webs. The web is said to be \textit{irreducible} if it has only one component. It is said to be \textit{smooth} if $W$ is smooth ($W'=W$, which implies that it is irreducible), and \textit{quasi-smooth} if any irreducible component is smooth.  
\end{definition}
\subsection{The foliation associated to a web}
The condition $(2)$ of the definition of a web implies that $\omega_W:TW'\to\mathcal{L}$ defines a holomorphic foliation $\F_W$ on $W'$. This foliation may have singularities, but not on $W_0$ (it is defined by the contact 1-form that does not vanish). The projection $\pi_W$ of the restriction of this foliation to $W_0$ generates locally, near any point $p\in X_0$, $d$ distinct one-codimensional regular foliations $\F_i$, and $\F_W$ may be understood as a \textit{decrossing} of these $d$ foliations. 
\begin{definition}
We call \textit{leaf} of the web any complex hypersurface in $X$ whose intersection with $X_0$ is locally a leaf of one of the local foliations $\F_i$.  
\end{definition}
\par Reciprocally, given $d$ distinct regular holomorphic foliations $\F_i$ ($1\leq i\leq d$) of codimension one on some open subset $U$ of $M$, assume moreover that these foliations are mutually transverse to each over at any point of $U$, and that there exists holomorphic coordinates $x=(x_1,\ldots,x_n)$ on $U$ such that $\frac{\partial}{\partial{x}_n}$
be tangent to none of the foliations $\F_i$, we may define $\tilde{\F}_i$ by an integrable non-vanishing 1-form $$\eta_i=dx_n-\sum^{n-1}_{\alpha=1}p^{i}_{\alpha}dx_{\alpha}.$$ The submanifolds $U_i$ of $\tilde{X}$ defined by the $n-1$ equations $p_{\alpha}=p^{i}_{\alpha}(x)$ ($1\leq\alpha\leq n-1$) does not depend on the choice of the local coordinates. The union $W_U=\coprod_{i=1}^{d}U_i$ is then a $d$-fold trivial covering space of $U$, and defines a $d$-web (with no critical set). Denoting by $\pi_i:U_i\to U$
the restriction of $\pi$ to the sheet $U_i$, the form $\pi^{*}_i(\eta_i)$ is equal to the restriction of $d\tilde{x}_n-\sum^{n-1}_{\alpha=1}p_{\alpha}(\tilde{x})d\tilde{x}_{\alpha}$
 to $U_i$, where $\tilde{x}=(\tilde{x}_1,\ldots,\tilde{x}_n)$, and $\tilde{x}_\alpha$ is the restriction of $x_{\alpha}$ to $U_i$
\subsection{Dicriticity of a holomorphic web}
Since $TW_0$ is isomorphic to $\pi_W^{-1}(TX_0)$, the natural injection $\mathcal{T}\to\pi^{-1}(TX)$ defines an obvious injective map $\mathcal{T}|_{W_0}\to TW_0$, and the image of this map is the tangent bundle of the restriction foliation $\F_W|_{W_0}$ since it is annihilated by $\omega_W$, the foliation $\F_W$ is non-singular on $W_0$. 
\par Let $B$ be an irreducible component of $W$ and $B'$ its regular part. In general, $\F_W$ will have singularities on $B'$.
\begin{definition}\label{dicri}
We shall say that the web is \textit{dicritical} on $B$ if the restriction of the foliation $\F_W$ to $B'$ is non-singular. Otherwise, the web is \textit{non-dicritical} on $B$.
We shall say that the web is \textit{dicritical} if it is dicritical on every irreducible component.  
\end{definition} 
\par We remark that the above definition of dicriticity of a web has been introduced by Cavalier-Lehmann \cite[D\'efinition 3.9]{Lehmann}. Such definition is different from the definition of dicriticity of foliations, however, both definitions coincide when la web is a radial foliation, i.e., it is locally induced by the 1-form $\omega=xdy-ydx$ around a singular point. 
\subsection{Meromorphic first integral for webs}
Let $\W$ be a codimension one holomorphic web on $X$. We say that $\W$ has a \textit{multiple-valued meromorphic first integral} if for each irreducible factor (or component) $\W'$ of $\W$ there exists a polynomial $P=P_{\W'}\in\C(X)[t]$ such that outside an analytic subset $\triangle=\triangle_P\subset X$ any connected component of the analytic subset $\{P=t_0\}$ is a leaf of $\W'$ for $t_0$ generic in $\C$. The product of the polynomials $P_{\W'}$ with $\W'$ a factor of $\W$ is called a multiple-valued meromorphic first integral for $\W$.
%\section{Real Analytic Levi-flat hypersurfaces}
%\par Motived by \cite{Lehmann} and \cite{generic}, we study singular codimension one holomorphic webs tangent to singular real-analytic Levi-flat hypersurfaces in compact complex manifolds with emphasis on the type of singularities of them. 
%
%\par To prove the main result of this section, we need the following result. 
%\begin{theorem}[Cerveau-Lins Neto \cite{alcides}]\label{lins-cerveau}
% Let $\F$ be a germ of codimension one holomorphic foliation at $0\in\mathbb{C}^{n}$, $n\geq{2}$, tangent to a germ of an irreducible real-analytic hypersurface $M$. Then $\F$ has a non-constant meromorphic first integral. In the case of dimension two, we can precise more:
%\begin{enumerate}
%\item If $\F$ is dicritical then it has a non-constant meromorphic first integral.
%\item If $\F$ is non-dicritical then it has a non-constant holomorphic first integral.
%\end{enumerate}
%\end{theorem}

\section{Lifting of Levi-flat hypersurfaces to the projectivised cotangent bundle}\label{levantamento}
In this section we give some remarks about the lifting of a Levi-flat hypersurface $M\subset X$ to the projectivised cotangent bundle of $TX$. Let $\mathbb{P}T^{*}X$ be the projectivised cotangent bundle of $X$ and $M$ be an irreducible real analytic hypersurface Levi-flat in $X$. Let $\pi:\mathbb{P}T^{*}X\rightarrow X$ be the usual projection. Since $\mathbb{P}T^{*}X$ is a $\mathbb{P}^{n-1}$-bundle over $X$, whose fiber $\mathbb{P}T^{*}_{\zeta}X$ over $\zeta\in X$, the regular part $M_{reg}$ of $M$ can be lifted to $\mathbb{P}T^{*}X$: just take, for every
$\zeta\in M_{reg}$, the complex hyperplane 
\begin{equation}
T_{\zeta}^{\mathbb{C}}M_{reg}:=T_{\zeta}M_{reg}\cap i(T_{\zeta}M_{reg})\subset T_{\zeta}X. 
\end{equation}
We call 
\begin{equation}
M'\subset\mathbb{P}T^{*}X
\end{equation}
this lifting of $M_{reg}$. We remark that it is no more a hypersurface: its real dimension
$2n-1$ is half of the real dimension of $\mathbb{P}T^{*}X$. However, it is still ``Levi-flat'', in a
sense which will be precised below.

\par Take now a point $y$ in the closure $\overline{M'}$ projecting on $X$ to a point $x\in \overline{M_{reg}}$. Now, we shall consider the following results from Brunella \cite[Lemma 2.1 and Proposition 2.2]{brunella}. 
\begin{lemma}\label{brunella-lemma}
 There exist, in a germ of neighborhood $U_{y}\subset \mathbb{P}T^{*}X$ of $y$, a germ of real analytic subset $N_{y}$
of real dimension $2n-1$ containing $M'\cap U_{y}$.
\end{lemma}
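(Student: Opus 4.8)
\smallskip
\noindent\emph{Plan of proof.} I would first observe that $M'$ is a \emph{semianalytic} subset of $\mathbb{P}T^{*}X$ of dimension $2n-1$, from which the lemma follows at once. Since the statement concerns a germ at $y$, choose holomorphic coordinates $z=(z_{1},\dots,z_{n})$ on a polydisc $V\subset\C^{n}$ centered at $x=\pi(y)$ so that $M\cap V=\{F=0\}$ for a reduced real-analytic defining function $F(z,\bar z)$, which then satisfies $dF\neq 0$ along $M_{reg}\cap V$, and on $\mathbb{P}T^{*}V$ use the homogeneous fibre coordinates $[\xi]=[\xi_{1}:\cdots:\xi_{n}]$ dual to $(dz_{1},\dots,dz_{n})$. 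For $\zeta\in M_{reg}\cap V$ one has $T^{\C}_{\zeta}M_{reg}=\ker\bigl(\sum_{j}\tfrac{\partial F}{\partial z_{j}}(\zeta,\bar\zeta)\,dz_{j}\bigr)$, so the lift of $\zeta$ is $\bigl(\zeta,[\partial_{z}F_{\C}(\zeta,\bar\zeta)]\bigr)$, where $\partial_{z}F_{\C}=\bigl(\tfrac{\partial F_{\C}}{\partial z_{1}},\dots,\tfrac{\partial F_{\C}}{\partial z_{n}}\bigr)$ is the holomorphic gradient of the complexification $F_{\C}$ of \eqref{defi2}; here $\partial_{z}F_{\C}(\zeta,\bar\zeta)\neq 0$ on $M_{reg}\cap V$, and $\ker\partial_{z}F_{\C}(\zeta,\bar\zeta)$ is the tangent space at $\zeta$ to the Segre variety $Q_{\zeta}$ of \eqref{segre-variety}, which contains the leaf $L_{\zeta}$. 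Consequently, on $\pi^{-1}(V)$,
$$ M'\cap\pi^{-1}(V)=\bigl\{(\zeta,[\xi])\in\mathbb{P}T^{*}V\ :\ \zeta\in M_{reg}\cap V,\ \ \xi\wedge\partial_{z}F_{\C}(\zeta,\bar\zeta)=0\bigr\}, $$
i.e. the intersection of $\pi^{-1}(M_{reg})$ with the real-analytic set $\{\xi\wedge\partial_{z}F_{\C}(z,\bar z)=0\}$. As $M_{reg}=M\setminus\sing(M)$ is locally the difference of two real-analytic sets, $\pi^{-1}(M_{reg})$, and hence $M'\cap\pi^{-1}(V)$, are semianalytic; and $M'\cap\pi^{-1}(V)$ is a real-analytic submanifold of real dimension $2n-1$, being carried diffeomorphically onto $M_{reg}\cap V$ by $\pi$.

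Given this, I would invoke the standard fact of semianalytic geometry that a semianalytic set of dimension $d$ is contained in a real-analytic set of dimension $d$: for a small enough neighbourhood $U_{y}$ of $y$, the intersection $N_{y}$ of all real-analytic subsets of $U_{y}$ containing $M'\cap U_{y}$ is real-analytic (by Noetherianity of the rings of real-analytic germs), contains $M'\cap U_{y}$, and satisfies $\dim_{\R}N_{y}=\dim_{\R}(M'\cap U_{y})=2n-1$. This already proves the lemma.

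For the later applications one wants a more concrete $N_{y}$, and I would produce it by complexification. Put $\mathcal{M}=\{F_{\C}(z,w)=0\}\subset V\times V^{*}$ with $V^{*}=\{w:\bar w\in V\}$; the reality of $F$ gives $\overline{F_{\C}(\bar w,\bar z)}=F_{\C}(z,w)$, and since $M$ is a genuine real hypersurface, $\partial_{z}F_{\C}$ does not vanish identically on $\mathcal{M}$ (otherwise $\mathcal{M}$ would be a cylinder $V\times\{g(w)=0\}$ over a hypersurface and $M$ would have real dimension $\le 2n-2$). Hence
$$ \widehat{\mathcal{M}}:=\overline{\bigl\{\bigl((z,[\partial_{z}F_{\C}(z,w)]),w\bigr):(z,w)\in\mathcal{M},\ \partial_{z}F_{\C}(z,w)\neq 0\bigr\}}\ \subset\ \mathbb{P}T^{*}V\times V^{*} $$
is a complex-analytic set of pure complex dimension $2n-1$, and with the real-analytic embedding $j:\mathbb{P}T^{*}V\to\mathbb{P}T^{*}V\times V^{*}$, $j(z,[\xi])=((z,[\xi]),\bar z)$, one takes $N_{y}:=j^{-1}(\widehat{\mathcal{M}})\cap U_{y}$. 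It is real-analytic, being the $j$-preimage of a complex- (hence real-) analytic set, and from the displayed description of $M'$, by continuity and passage to closures, $N_{y}\supset\overline{M'}\cap U_{y}\supset M'\cap U_{y}$.

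It remains to verify $\dim_{\R}N_{y}=2n-1$ for this explicit model, and I expect this to be the main obstacle. The inequality ``$\ge$'' is clear, since $M'\cap U_{y}\subset N_{y}$ is a $(2n-1)$-dimensional submanifold accumulating at $y$. For ``$\le$'', note that $N_{y}$ projects into $M\cap V$, its fibre over $\zeta$ equalling the fibre of $\widehat{\mathcal{M}}$ over $(\zeta,\bar\zeta)\in\mathcal{M}$; this is a single point for $\zeta\in M_{reg}\cap V$ (there $dF\neq 0$, so $(\zeta,\bar\zeta)$ is a smooth point of $\mathcal{M}$ with $\partial_{z}F_{\C}\neq 0$) and becomes positive-dimensional only over $\{dF=0\}\cap M\subset\sing(M)$, a set nowhere dense in $M$. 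Bounding this last contribution is precisely where the Levi-flatness of $M$ has to be used — through the non-vanishing of $\partial_{z}F_{\C}$ on $\mathcal{M}$ and the structure of the Segre-degenerate (dicritical) locus recalled in Section~\ref{Levi} — so as to prevent $j^{-1}(\widehat{\mathcal{M}})$ from acquiring spurious components of dimension $>2n-1$ over $\sing(M)$. The soft argument of the second paragraph sidesteps this entirely, at the price of losing the concrete form of $N_{y}$ that feeds into \cite[Proposition 2.2]{brunella}.
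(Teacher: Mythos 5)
The paper itself offers no proof of this lemma: it is imported verbatim from Brunella \cite[Lemma 2.1]{brunella}, so your proposal has to be measured against that source rather than against anything in the text. Your second paragraph is correct and is in substance the original argument: $M'$ is a semianalytic subset of $\mathbb{P}T^{*}X$ of dimension $2n-1$ (a graph over the semianalytic set $M_{reg}$), Noetherianity of the ring of real-analytic germs at $y$ makes the smallest real-analytic germ containing the germ of $M'$ well defined, and the theorem of {\L}ojasiewicz that this analytic Zariski closure of a \emph{semianalytic} germ has the same dimension gives $\dim_{\R}N_{y}=2n-1$. This already proves the lemma; note that staying in the semianalytic (not merely subanalytic) category is essential here, since dimension preservation of the analytic closure fails for subanalytic sets. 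Two small imprecisions in your first paragraph: a reduced real-analytic defining function need not satisfy $dF\neq 0$ on all of $M_{reg}\cap V$ (real-analytic hypersurfaces may fail to be coherent), only on a dense open subset of it, and correspondingly your displayed identity for $M'\cap\pi^{-1}(V)$ fails over the points of $M_{reg}$ where $\partial_{z}F_{\C}(\zeta,\bar\zeta)=0$, where the right-hand side contains the whole fibre. Both are harmless: the graph over the dense good locus is semianalytic of dimension $2n-1$ and its closure contains $M'$. Finally, the explicit complexified model $N_{y}=j^{-1}(\widehat{\mathcal{M}})$ of your last two paragraphs is not needed for the lemma and, as you acknowledge, its dimension bound over $\sing(M)$ is not established; in Brunella's treatment the complexification enters only at the next step (Proposition \ref{brunella-proposition}), not here, so you lose nothing by discarding it.
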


\begin{proposition}\label{brunella-proposition}
 Under the above conditions, in a germ of neighborhood $V_{y}\subset U_{y}$ of $y$, there exists a germ of complex analytic subset $Y_{y}$ of complex dimension $n$ containing $N_{y}\cap V_{y}$.
\end{proposition}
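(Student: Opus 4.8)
The plan is to produce $Y_y$ as the smallest germ of complex-analytic subset of $\mathbb{P}T^{*}X$ at $y$ containing $M'\cap V_y$, and then to pin down its dimension by a Segre-variety count, in the spirit of \cite{pinchuk} and \cite{Russians}. The geometric input is that over $M_{reg}$ the lifted set $M'$ is foliated by complex submanifolds of $\mathbb{P}T^{*}X$ of complex dimension $n-1$: for each leaf $L$ of the Levi foliation $\mathcal{L}_M$ one has $T^{\mathbb{C}}_{\zeta}M_{reg}=T_{\zeta}L$ for $\zeta\in L$, so the lift of $L$ is the Gauss map $\zeta\mapsto T_{\zeta}L$, which is holomorphic; its image $\widehat{L}$ is a complex $(n-1)$-dimensional (immersed) submanifold of $\mathbb{P}T^{*}X$ contained in $M'$, and these $\widehat{L}$ fill $M'$. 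Let $N_y$ be as in Lemma \ref{brunella-lemma}; replacing it by the smallest real-analytic germ at $y$ containing $M'\cap U_y$ (still of real dimension $2n-1$), we may assume that $M'\cap U_y$ lies in no proper real-analytic subgerm of $N_y$.

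Next I would pass to the complexification. In local holomorphic coordinates identify $(\mathbb{P}T^{*}X,y)$ with $(\mathbb{C}^{2n-1},0)$ and write $N_y=\{\psi_1=\dots=\psi_k=0\}$ with the $\psi_j$ real-analytic; complexifying the $\psi_j$ yields a germ of complex-analytic subset $\mathcal{N}\subset(\mathbb{C}^{2n-1}_{Z}\times\mathbb{C}^{2n-1}_{W},0)$ of complex dimension $2n-1$, invariant under the anti-holomorphic involution $(Z,W)\mapsto(\overline{W},\overline{Z})$, with projections $\pi_1,\pi_2$ onto the two factors and associated Segre varieties $\Sigma_{W}=\pi_1(\pi_2^{-1}(W))$. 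For $p\in M'\cap U_y$ the complex $(n-1)$-dimensional leaf $\widehat{L}\subset M'$ through $p$ is contained in $N_y$, so by the standard fact that a complex submanifold contained in a real-analytic set lies in the Segre variety of each of its points (cf. \cite[Corollary 2.2]{Russians}) we get $\widehat{L}\subset\Sigma_{\overline{p}}$, whence $\dim_{\mathbb{C}}\Sigma_{\overline{p}}\ge n-1$; by the reality of $\mathcal{N}$ this also says $\dim_{\mathbb{C}}\pi_1^{-1}(p)\ge n-1$.

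Therefore the set $F=\{Z:\dim_{\mathbb{C}}\pi_1^{-1}(Z)\ge n-1\}$, which is a complex-analytic germ, contains $M'\cap V_y$ and hence contains $Y_y$. Counting dimensions in $\mathcal{N}$: since $\pi_1\bigl(\pi_1^{-1}(Y_y)\bigr)\supset M'\cap V_y$ is Zariski-dense in $Y_y$ (minimality of $Y_y$) and every fibre of $\pi_1$ over $Y_y$ has dimension $\ge n-1$, the fibre-dimension theorem gives $\dim_{\mathbb{C}}Y_y+(n-1)\le\dim_{\mathbb{C}}\pi_1^{-1}(Y_y)\le\dim_{\mathbb{C}}\mathcal{N}=2n-1$, i.e. $\dim_{\mathbb{C}}Y_y\le n$; on the other hand $Y_y\supset M'\cap V_y$, which has real dimension $2n-1$, forces $\dim_{\mathbb{C}}Y_y\ge n$. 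Thus $\dim_{\mathbb{C}}Y_y=n$, and since $Y_y$ is in particular a real-analytic germ containing $M'\cap V_y$ it contains the smallest such, namely $N_y\cap V_y$; after shrinking $V_y$ this $Y_y$ is the asserted germ.

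The step I expect to be the main obstacle is the legitimacy of these ``complex-analytic Zariski closures'': one must ensure that a \emph{minimal} complex-analytic germ $Y_y$ containing $M'\cap V_y$ exists and behaves well, and that $F$ (respectively $\pi_1(\mathcal{N})$) is genuinely a complex-analytic germ — images of complex-analytic sets under holomorphic projections need not be analytic, by Osgood's phenomenon — and one must treat separately the degenerate case in which $M'\cap V_y$ fails to be maximally real inside $Y_y$, which is precisely the Segre-degenerate, i.e. dicritical, configuration. This can be handled either by an induction on $\dim_{\mathbb{C}}Y_y$ exploiting minimality, or by Remmert's proper mapping theorem after restricting $\mathcal{N}$ to a suitable relatively compact piece on which $\pi_1$ becomes proper; either way, the odd real dimension $2n-1$ of $N_y$ together with the $(n-1)$-dimensional complex leaves is exactly what pins the answer to complex dimension $n$.
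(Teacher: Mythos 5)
First, a point of reference: the paper does not prove this proposition at all --- it is quoted from Brunella \cite[Proposition 2.2]{brunella} --- so your proposal stands or falls on its own. Its skeleton is the right one and is essentially Brunella's: lift the Levi leaves by the Gauss map to complex $(n-1)$-dimensional leaves $\widehat L\subset M'\subset N_y$, complexify the (minimal) real-analytic germ $N_y$ to a complex germ $\mathcal N$ of dimension $2n-1$, use the standard complexification lemma to get $\widehat L\times\overline{\widehat L}\subset\mathcal N$, hence fibres of $\pi_1$ and $\pi_2$ of dimension $\ge n-1$ over points of $M'$, and squeeze $\dim_{\mathbb C}Y_y$ between $n$ and $n$. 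The lower bound $\dim_{\mathbb C}Y_y\ge n$ and the final inclusion $N_y\cap V_y\subset Y_y$ (after normalizing $N_y$ to be the real-analytic Zariski closure of $M'$, which is legitimate) are correct.

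The gap is exactly where you suspect it, and it is fatal as written, not a removable technicality. (a) Your set $F=\{Z:\dim_{\mathbb C}\pi_1^{-1}(Z)\ge n-1\}$ is the $\pi_1$-image of the Cartan--Remmert analytic set $\{q\in\mathcal N:\dim_q\pi_1^{-1}(\pi_1(q))\ge n-1\}$, and images of analytic sets under non-proper holomorphic maps need not be analytic; so the deduction ``$Y_y\subset F$, hence every fibre of $\pi_1$ over $Y_y$ has dimension $\ge n-1$'' is unsupported. (b) More seriously, the inequality $\dim Y_y+(n-1)\le\dim\pi_1^{-1}(Y_y)$ is \emph{false} under the only hypothesis you actually have, namely that the fibres have dimension $\ge n-1$ over a Zariski-dense subset of $Y_y$: the map $(z,w,t)\mapsto(z,zw,zwe^{w})$ from $\mathbb C^{3}$ to $\mathbb C^{3}$ has every fibre of dimension $\ge1$, yet its image is Zariski-dense in $\mathbb C^{3}$, so ``source $\ge$ closure of image $+$ fibre'' fails. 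This is Osgood's phenomenon, which you name but do not defuse; neither of your proposed escapes works as stated (restricting $\mathcal N$ to a relatively compact piece does not make $\pi_1$ proper in the sense required by Remmert's proper mapping theorem, and the induction on $\dim Y_y$ is not sketched).

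The repair uses structure you have but do not exploit. Since $\delta(M'_{reg})$ is Zariski-dense in $\mathcal N$ (minimality of $N_y$ plus the Whitney--Bruhat complexification), upper semicontinuity of fibre dimension forces \emph{every} fibre of $\pi_1|_{\mathcal N}$ to have dimension $\ge n-1$ at \emph{every} point of $\mathcal N$, so the generic rank of $\pi_1|_{\mathcal N}$ is at most $n$. At a generic point $p_0\in M'_{reg}$ the rank is locally constant, and the constant rank theorem produces an honest locally closed complex submanifold $S_{p_0}=\pi_1(\text{nbhd of }\delta(p_0))$ of dimension exactly $n$ (it contains the $(2n-1)$-real-dimensional $M'$, so its rank cannot be $<n$) containing $M'$ near $p_0$. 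Then each irreducible component of $Y_y$ meets $M'_{reg}$ in a set of full real dimension $2n-1$ (again by minimality), hence has local dimension $\le n$ at some such $p_0$, hence dimension $\le n$ everywhere by pure-dimensionality of irreducible components. With that substitution your argument closes; without it, the decisive upper bound is not proved.
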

\par As consequence, we have an application of the above results.
\begin{proposition}\label{lift}
Let $M$ be an irreducible real-analytic Levi-flat hypersurface in a complex manifold $X$ of complex dimension $n$. Then there exists a real analytic subvariety $N\subset\mathbb{P} T^{*}X$ of real dimension $2n-1$ which contains $M'$.
\end{proposition}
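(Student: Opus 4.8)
The plan is to take $N:=\overline{M'}$, the closure of $M'$ in $\mathbb{P}T^{*}X$, and to prove that it is a real-analytic subvariety of real dimension $2n-1$ containing $M'$. I would first record the basic properties of $M'$ itself: the lift $\zeta\mapsto T^{\mathbb{C}}_{\zeta}M_{reg}$ is a real-analytic section of $\pi$ over $M_{reg}$, so $M'$ is a real-analytic submanifold of $\mathbb{P}T^{*}X$ with $\dim_{\R}M'=\dim_{\R}M_{reg}=2n-1$; moreover $\pi|_{M'}\colon M'\to M_{reg}$ is a homeomorphism and $M_{reg}$ is locally closed in $X$, so $M'$ is locally closed in $\mathbb{P}T^{*}X$ (and connected, since $M$ is irreducible). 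In particular, near every point of $M'$ the set $N=\overline{M'}$ coincides with the submanifold $M'$, so there $N$ is already real-analytic of dimension $2n-1$.

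It then remains to understand $N$ near a point $y\in\overline{M'}\setminus M'$. Any such $y$ is a limit of points of $M'$, hence $x:=\pi(y)$ is a limit of points of $M_{reg}$, i.e. $x\in\overline{M_{reg}}$, so Lemma \ref{brunella-lemma} applies: there are a neighborhood $U_{y}$ of $y$ and a real-analytic subset $N_{y}\subset U_{y}$ of real dimension $2n-1$ with $M'\cap U_{y}\subset N_{y}$. Shrinking $U_{y}$ so that $N_{y}$ is closed in it, a short argument gives $N\cap U_{y}=\overline{M'\cap U_{y}}$ (closure in $U_{y}$), and hence $N\cap U_{y}\subset N_{y}$. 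The heart of the proof is to improve this to: $N\cap U_{y}$ equals the union of the (finitely many) irreducible components of $N_{y}$ that meet $M'\cap U_{y}$. For the inclusion $\subseteq$ one uses that $M'\cap U_{y}$ is a $(2n-1)$-dimensional submanifold contained in the $(2n-1)$-dimensional set $N_{y}$, so each of its points lies on a $(2n-1)$-dimensional component of $N_{y}$; for the reverse inclusion one uses that along such a component $M'$ fills an open subset, so the component lies in $\overline{M'\cap U_{y}}$. Granting this, $N\cap U_{y}$ is a union of irreducible components of a real-analytic set, hence itself real-analytic of dimension at most $2n-1$.

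Putting the two cases together, $N=\overline{M'}$ is closed, real-analytic near each of its points, of local dimension $2n-1$ over $M'$ and at most $2n-1$ elsewhere; therefore it is a real-analytic subvariety of $\mathbb{P}T^{*}X$ of real dimension $2n-1$ containing $M'$, as claimed. As a variant, one can instead build $N$ germ by germ as the smallest real-analytic set containing $\overline{M'}$: by Lemma \ref{brunella-lemma} each such germ sits inside $N_{y}$ and so has dimension $2n-1$, and one checks that these minimal germs are compatible along $\overline{M'}$; here Proposition \ref{brunella-proposition}, which confines everything to a germ of complex-analytic set of complex dimension $n$, can be used to make the compatibility transparent.

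I expect the main obstacle to be exactly the middle step: passing from ``$N$ is locally contained in a real-analytic set of dimension $2n-1$'' (all that Lemma \ref{brunella-lemma} directly provides) to ``$N$ is locally equal to a real-analytic set.'' The closure of a real-analytic submanifold is a priori only semianalytic, so this upgrade genuinely requires both the dimension bound coming from Lemma \ref{brunella-lemma} and the fact that $M'$ is the lift of an irreducible real-analytic hypersurface rather than of an arbitrary submanifold.
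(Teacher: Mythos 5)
There is a genuine gap at exactly the step you yourself identify as the heart of the argument. You set $N=\overline{M'}$ and claim that, near a point $y\in\overline{M'}\setminus M'$, the set $N\cap U_{y}$ equals the union of the irreducible components of $N_{y}$ that meet $M'\cap U_{y}$. The inclusion $\subseteq$ is fine, but the reverse inclusion is not justified and is false in general: an irreducible real-analytic set containing a nonempty open piece of its top-dimensional regular part need \emph{not} be contained in the closure of that piece. Cartan's umbrella $\{z(x^{2}+y^{2})=x^{3}\}\subset\R^{3}$ is the standard counterexample: it is irreducible, yet the closure of its two-dimensional regular part does not contain the line $\{x=y=0\}$. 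For the same reason $\overline{M'}$ is a priori only semianalytic, and Lemma \ref{brunella-lemma} --- which only provides a real-analytic set \emph{containing} $M'$ --- does not by itself upgrade the closure to an analytic set. You flag this as ``the main obstacle,'' but the sentence ``so the component lies in $\overline{M'\cap U_{y}}$'' is precisely the unproved step, so the main line of your argument does not close.

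The paper sidesteps the problem rather than solving it: the proposition only asks for \emph{some} real-analytic $N$ of real dimension $2n-1$ containing $M'$, so there is no need to take $N=\overline{M'}$. The paper's proof keeps the whole relevant pieces $N'_{y}=N_{y}\cap V_{y}$ (possibly strictly larger than $\overline{M'}$ near $y$), observes via \cite[Lemma 2.2]{burns} that they are Levi-flat hypersurfaces inside the complex sets $Y_{y}$ of Proposition \ref{brunella-proposition}, and glues them by matching the common irreducible components of the $Y_{y}$ that contain $M'$ on overlaps; the complex-analytic ambient sets are what make the identification of components workable. Your proposed ``variant'' --- taking at each point the smallest real-analytic germ containing $\overline{M'}$ --- is essentially this construction and is the right fix, but as written the compatibility of these minimal germs on overlaps is asserted rather than checked, and that compatibility is exactly where Proposition \ref{brunella-proposition} must enter. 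A worked-out version of the variant would recover the paper's proof; the $N=\overline{M'}$ route, as it stands, does not.
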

\begin{proof}
Let $y\in\overline{M'}$, it follows from Lemma \ref{brunella-lemma} that there exists, in a neighborhood $U_{y}\subset \mathbb{P}T^{*}X$ containing $y$, a real analytic subset $N_{y}$ of dimension $2n-1$ containing $M'\cap U_{y}$. Proposition \ref{brunella-proposition} implies that there exists, in a neighborhood $V_{y}\subset U_{y}$ of $y$, a complex analytic subset $Y_{y}\subset\mathbb{P}T^{*}X$ of complex dimension $n$ containing $N_{y}\cap V_{y}$. Then $N_{y}\cap V_{y}\subset Y_y$ is a real analytic hypersurface in $Y_{y}$, and it is Levi-flat because each irreducible component contains a Levi-flat piece (cf. \cite[Lemma 2.2]{burns}).
\par Let us denote $N'_{y}=N_{y}\cap V_{y}$. These local constructions are sufficiently 
canonical to be patched together, when $y$ varies on $\overline{M'}$: if $Y_{y_{1}}\subset V_{y_{1}}$ and $Y_{y_{2}}\subset V_{y_{2}}$ are as above,
 with $M'\cap V_{y_{1}}\cap V_{y_{2}}\neq \emptyset$, then $Y_{y_{2}}\cap( V_{y_{1}}\cap V_{y_{2}})$ and $Y_{y_{1}}\cap( V_{y_{1}}\cap V_{y_{2}})$ 
have some common irreducible components containing $M'\cap V_{y_{1}}\cap V_{y_{2}}$, so that $N'_{y_{1}}$,  
$N'_{y_{2}}$ can be glued by identifying those components.
 In this way, we obtain a real-analytic subvariety $N$ on $\mathbb{P}T^{*}X$ which contains $M'$.
\end{proof}
We need the following proposition of Lebl  \cite[Proposition 2.8]{lebl}.
\begin{proposition}\label{piece}
Let $M\subset\C^{n}$ be a connected real-analytic submanifold of real dimension $2n-1$. M is Levi-flat if and only if there exists an open set $N\subset M$ such that $N$ is Levi-flat.
\end{proposition}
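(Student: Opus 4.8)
The plan is to encode Levi-flatness of $M$ as the vanishing of a single real-analytic tensor on $M$ and then let the identity principle for real-analytic functions on a connected manifold do the rest. The implication ``$M$ Levi-flat $\Rightarrow$ some open $N\subset M$ Levi-flat'' is trivial: Levi-flatness is a local condition on the CR distribution and is inherited by open submanifolds, so one may take $N=M$. The content is therefore the converse, and I would proceed as follows.

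First I would set up the relevant bundle. Since $M\subset\C^n$ is a real-analytic submanifold of real codimension one, for every $p\in M$ the complex tangent space $T_p^{\C}M=T_pM\cap i\,T_pM$ has complex dimension exactly $n-1$: being a complex subspace of the $(2n-1)$-real-dimensional space $T_pM$ it has complex dimension $\le n-1$, while $T_pM$ and $i\,T_pM$ each have real codimension one, so their intersection has real codimension at most two, hence complex dimension $\ge n-1$. Thus $p\mapsto T_p^{\C}M$ is a real-analytic subbundle $D\subset TM$ of constant real rank $2n-2$, and $Q:=TM/D$ is a real-analytic real line bundle on $M$. Next, for local real-analytic sections $X,Y$ of $D$, the assignment $(X,Y)\mapsto[X,Y]\bmod D$ is antisymmetric and bilinear over real-analytic functions (the standard tensoriality computation), so it defines a real-analytic section $\Phi$ of $\mathrm{Hom}(\Lambda^2 D,Q)$ over $M$. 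By the Frobenius theorem, $D$ is integrable near a point $p$ --- equivalently $M$ is Levi-flat near $p$, in the sense of Section \ref{Levi} --- if and only if $\Phi$ vanishes on a neighborhood of $p$.

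Now the propagation step. Set $A=\{p\in M:\Phi\equiv 0\text{ on a neighborhood of }p\}$. This set is open by definition, and it contains $N$: indeed, $N$ open in $M$ has $T_qN=T_qM$ for $q\in N$, so the CR distribution of $N$ is $D|_N$ and ``$N$ Levi-flat'' means precisely $\Phi|_N\equiv 0$. To see that $A$ is closed, pick $q\in\overline A$, choose a connected coordinate ball $U\ni q$ over which $\mathrm{Hom}(\Lambda^2 D,Q)$ is trivialized, and represent $\Phi|_U$ by finitely many real-analytic functions $f_1,\dots,f_m$ on $U$; since $q\in\overline A$, these functions vanish on a nonempty open subset of $U$, hence identically on the connected real-analytic manifold $U$ by the identity theorem for real-analytic functions, so $q\in A$. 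Since $M$ is connected and $A\neq\emptyset$, we conclude $A=M$; thus $\Phi\equiv 0$ on $M$, i.e. $D$ is (globally) integrable and $M$ is Levi-flat.

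I do not expect a genuine obstacle here: the only points requiring (routine) care are the constancy of the rank of $D$ and the tensoriality of $\Phi$, both classical. If one prefers to avoid bundles, the same argument runs with a local real-analytic defining function $r$ (so $M=\{r=0\}$ with $dr\neq 0$): the Levi form at $p$ is the restriction of $i\,\partial\bar\partial r(p)$ to the complex tangent space $T_p^{\C}M$, a Hermitian form depending real-analytically on the $2$-jet of $r$, and its vanishing again propagates from $N$ to all of $M$ by the identity principle after patching the local statements along $M$. The bundle-theoretic formulation is used only to produce one globally defined real-analytic tensor, which makes the patching automatic.
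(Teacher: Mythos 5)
Your proof is correct. The paper does not argue this proposition itself --- it is quoted from Lebl \cite[Proposition 2.8]{lebl} --- and your argument (the CR distribution of a real-analytic hypersurface has constant rank $n-1$, its Frobenius integrability obstruction is a real-analytic tensor, and the vanishing of a real-analytic section on a nonempty open set propagates to all of the connected manifold by the identity principle) is essentially the standard proof of that statement.
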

Now, we prove that the property of tangency between the a web and a Levi-flat hypersurface is preserved by lifting to the projectivized tangent bundle. 
\begin{proposition}\label{tange}
Let $\mathcal{W}$ be a holomorphic $d$-web on a compact complex surface $X$ tangent to a real-analytic Levi-flat hypersurface $M\subset X$. Then 
there exists a real-analytic Levi-flat hypersurface $N\subset W$ tangent to $\F_W$.
\end{proposition}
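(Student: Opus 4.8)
The plan is to show that the lift $N\subset\mathbb{P}T^{*}X$ of $M$ produced by Proposition \ref{lift} is contained in the web variety $W$, and that $N$ then automatically is a real hypersurface of $W$, is Levi-flat, and has $\F_W$ as its Levi foliation. \emph{Step 1: the fibrewise picture.} First I would make tangency explicit over the regular part of the web. Since $\dim_{\R}M_{reg}=2n-1$ while the caustic $\Delta=\pi(\Gamma_W)$ is a complex-analytic subset of $X$ of complex dimension at most $n-1$, hence of real dimension at most $2n-2$, the set $M_{reg}$ is not contained in $\Delta$, so $M_{reg}\cap X_0$ is open and dense in $M_{reg}$. On $X_0$ the web is locally a union of $d$ mutually transverse regular codimension-one foliations $\F_1,\dots,\F_d$, and tangency of $\W$ to $M$ means that near every point of $M_{reg}\cap X_0$ the Levi foliation $\mathcal{L}_M$ coincides with one of them, say $\F_{i_0}$. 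By the local description of the sheets $U_i$ of $W$ in Section \ref{webs-theory} (where $U_i=\{p_\alpha=p^i_\alpha(x)\}$ and the tautological contact form restricts on $U_i$ to the pullback of the defining form of $\F_i$), the lift $M'$, which sends $\zeta\mapsto[T^{\C}_\zeta M_{reg}]=[T_\zeta\F_{i_0}]$, lands in the sheet $U_{i_0}\subset W$; moreover the leaf of $\F_W$ through the lifted point of $p\in M_{reg}\cap X_0$ is precisely the lift $L'_p$ of the leaf $L_p$ of $\F_{i_0}=\mathcal{L}_M$. Hence $M'\cap\pi^{-1}(X_0)\subset W$; since $M'$ is irreducible and $W$ is closed, $M'\subset W$, and $M'$ is foliated by the complex $(n-1)$-manifolds $L'_p\subset W$, so it is a Levi-flat set of real dimension $2n-1$ sitting inside the complex $n$-fold $W$.

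\emph{Step 2: $N\subset W$.} Let $N$ be the subvariety given by Proposition \ref{lift}; replacing it by the smallest real-analytic subvariety of $\mathbb{P}T^{*}X$ containing $M'$, we may assume $N$ is irreducible of real dimension $2n-1$ with connected regular part. To prove $N\subset W$, fix a generic point $y\in M'$: we may require that $y$ be a smooth point of $N$, that $y\in W_0$ (possible because $M'\cap\Gamma_W$ has real dimension at most $2n-2$), and that $(N,y)$ be contained — as in the proof of Proposition \ref{lift} via Lemma \ref{brunella-lemma} and Proposition \ref{brunella-proposition} — in the \emph{minimal} germ of complex-analytic set $(Y_y,y)$ through it, which is therefore irreducible of complex dimension $n$. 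Now $(W,y)$ is a smooth, hence irreducible, germ of complex dimension $n$, and both $(W,y)$ and $(Y_y,y)$ contain the germ $(M',y)$ of real dimension $2n-1$. If $(W,y)\neq(Y_y,y)$, their intersection would be a complex-analytic germ of complex dimension at most $n-1$, i.e. of real dimension at most $2n-2<2n-1$, which is impossible. Hence $(W,y)=(Y_y,y)$ and $N\subset Y_y=W$ near $y$; since $N_{reg}$ is connected and agrees with $W$ on the nonempty open set of such points $y$, analytic continuation gives $N\subset W$.

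\emph{Step 3: conclusion.} From $N\subset W$ and $\dim_{\R}N=2n-1=\dim_{\R}W-1$ it follows that $N$ is a real hypersurface of $W$. In a holomorphic chart of $W$ around a generic point $y\in M'$, this hypersurface contains the open piece $M'\cap V_y$, which is genuinely Levi-flat (it is foliated by the complex hypersurfaces $L'_p$ of $W$), so Lebl's Proposition \ref{piece} implies that the connected real-analytic hypersurface $N$ of $W$ is Levi-flat. Finally, by Step 1 the Levi foliation $\mathcal{L}_N$ agrees with the holomorphic foliation $\F_W$ on the nonempty open set $M'\cap\pi^{-1}(X_0)\cap N_{reg}$; since $\mathcal{L}_N$ extends, by Cartan's trivialization, to a germ of holomorphic foliation near $N_{reg}$ in $W$, and two holomorphic foliations that agree on an open set agree everywhere, $\F_W$ is tangent to $N$.

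The step I expect to be the main obstacle is Step 2, namely identifying the locally constructed complex-analytic sets $Y_y$ of Brunella's argument with the global web variety $W$. The points requiring care are: first, that a generic point of $M'$ lies in $W_0$, so that $W$ is a smooth complex $n$-fold there — this uses both the definition of a web (the critical set $\Gamma_W$ has complex dimension at most $n-1$) and the real dimension $2n-1$ of $M'$; and second, that $Y_y$ be taken minimal, so that it is irreducible of complex dimension $n$ and the dimension inequality $\dim_{\C}\big((W,y)\cap(Y_y,y)\big)\le n-1$ forces $(W,y)=(Y_y,y)$. Everything else is a routine adaptation of Brunella's treatment of foliations tangent to Levi-flat hypersurfaces, combined with the propagation of Levi-flatness from an open subset (Proposition \ref{piece}).
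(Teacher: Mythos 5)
Your proof is correct and follows essentially the same route as the paper: lift $M_{reg}$ to $M'\subset\mathbb{P}T^{*}X$, embed it in the real-analytic set $N$ of Proposition \ref{lift}, use tangency to place $M'$ inside $W$, and invoke Lebl's Proposition \ref{piece} to propagate Levi-flatness. The only deviation is that you prove $N\subset W$ outright (by identifying Brunella's local complexifications $Y_y$ with $W$ at generic points of $M'$), whereas the paper simply replaces $N$ by $N\cap W$; both yield the desired hypersurface, and your Steps 1--2 supply details (notably $M'\subset W$) that the paper leaves implicit.
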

\begin{proof}
Without loss of generality, we can assume that $M$ is irreducible.
By Lemma \ref{lift}, there exists a real-analytic subvariety $N$ of real dimension three in $\mathbb{P}T^{*}X$ such that $M'\subset N$, where $M'$ is the lifting of the regular part of $M$ under $\pi:\mathbb{P}T^{*}X\to X$. Let $W\subset X$ be the complex subvariety associated with $\W$.
In order not to load notations, we denote $N$ by $N\cap W$. Therefore, $N$ is a real-analytic hypersurface in $W$, of course, $N$ is Levi-flat, because the tangency condition between $\W$ and $M$ implies the tangency between $\F_W$ and $N$ in some dense open subset of $W$. It suffices to assert that $N$ is a real-analytic Levi-flat hypersurface in $W$, by Proposition \ref{piece}.
\end{proof}
\par We will use the following result given by Beltr\'an--Fern\'andez-P\'erez--Neciosup \cite[Theorem 1]{andres}.

\begin{theorem}\label{second}
Let $\mathcal{F}$ be a holomorphic foliation on a compact complex surface $X$ tangent to an irreducible real-analytic Levi-flat hypersurface $M \subset X$. Suppose that the self-intersection $C\cdot C>0$, where $C\subset M$ is an irreducible compact complex curve invariant by $\mathcal{F}$, then there exists a dicritical singularity $p \in \sing (\mathcal{F})\cap C$ such that $\mathcal{F}$ has a non-constant meromorphic first integral at $p$.
\end{theorem}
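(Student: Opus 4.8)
The plan is to combine the Camacho--Sad formula (Theorem~\ref{CS}) with the Cerveau--Lins Neto theorem \cite[Theorem~1]{alcides} on foliations tangent to Levi-flat hypersurfaces, via a sign argument on the local self-intersection indices. First I would check that $\sing(\F)\cap C\neq\emptyset$: if not, $C$ would be a compact leaf of $\F$, and Theorem~\ref{CS} would give $C\cdot C=\sum_{p\in\sing(\F)\cap C}\cs_p(\F,C)=0$ as an empty sum, contradicting $C\cdot C>0$. Hence $\sing(\F)\cap C$ is a nonempty finite set.

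Next, for each $p\in\sing(\F)\cap C$, since $p\in C\subset M$ and $\F$ is tangent to $M$, the germ of $\F$ at $p$ is a germ of codimension one holomorphic foliation tangent to the germ of the real-analytic Levi-flat hypersurface $M$ at $p$; by \cite[Theorem~1]{alcides} it admits a non-constant meromorphic (possibly holomorphic) first integral $h_p=f_p/g_p$, with $f_p,g_p\in\mathcal{O}(\C^2,p)$ relatively prime. I would then distinguish whether $p$ is a dicritical or a non-dicritical singularity of $\F$. If some $p\in\sing(\F)\cap C$ is dicritical, there is nothing left to prove: that point satisfies the conclusion, with the meromorphic first integral $h_p$.

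The heart of the argument is the non-dicritical case, where I claim that $\cs_p(\F,C)\leq 0$. If $f_p(p)=g_p(p)=0$, then every fiber $\{f_p-tg_p=0\}$ passes through $p$ and $p$ is dicritical; so in the non-dicritical case, after possibly replacing $h_p$ by $1/h_p$ and subtracting a constant, $h_p$ is holomorphic near $p$ with $h_p(p)=0$, and $C$ is one of the irreducible components $\{f_1=0\}$ of the fiber $\{h_p=0\}=\bigcup_i\{f_i=0\}$, where $h_p=\prod_i f_i^{n_i}$ with $n_i\geq 1$. Then $\F$ is the logarithmic foliation defined by $\omega=\sum_i n_i\,\frac{df_i}{f_i}$, and a residue computation of the Camacho--Sad index along $C=\{f_1=0\}$ (writing $g\omega=k\,df_1+f_1\eta$ with $k$ prime to $f_1$ and evaluating $-\frac{1}{2\pi i}\int_{\partial C}\frac{\eta}{k}$) gives a non-positive rational number, since all logarithmic residues $n_i$ are strictly positive and the local intersection numbers $(C,\{f_i=0\})_p$ together with the contributions of the singular branches of $C$ enter with the correct sign. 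This local lemma is where I expect the main difficulty: carrying out the index estimate cleanly when $h_p$ is non-reduced and the branch $C$ is singular or tangent to the other components, which is handled through the logarithmic normal form.

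Finally, I would conclude by contradiction: if every $p\in\sing(\F)\cap C$ were non-dicritical, the previous step together with Theorem~\ref{CS} would yield
\[
C\cdot C=\sum_{p\in\sing(\F)\cap C}\cs_p(\F,C)\leq 0,
\]
contradicting the hypothesis $C\cdot C>0$. Therefore at least one $p\in\sing(\F)\cap C$ is a dicritical singularity of $\F$, and at such a point $\F$ admits the non-constant meromorphic first integral furnished by \cite[Theorem~1]{alcides}, which is exactly the assertion of the theorem.
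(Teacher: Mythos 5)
The paper does not actually prove this statement---it is imported verbatim from Beltr\'an--Fern\'andez-P\'erez--Neciosup \cite[Theorem 1]{andres}---but your proposal reconstructs essentially the argument of that reference, and it is sound: Camacho--Sad forces $\sing(\F)\cap C\neq\emptyset$, Cerveau--Lins Neto gives a non-constant meromorphic first integral at every point of $\sing(\F)\cap C\subset M$, and the non-positivity of the index at non-dicritical points yields the contradiction with $C\cdot C>0$. The local lemma you flag as the main difficulty does go through, including when the germ of $C$ at $p$ has several branches: writing $h_p=\prod_i f_i^{n_i}$ and $C=\{\prod_{i\in I}f_i=0\}$ near $p$, one gets $\cs_p(\F,C_i)=-\tfrac{1}{n_i}\sum_{j\neq i}n_j\,(C_i,C_j)_p$ for each branch (no smoothness of $C_i$ is needed), and the additivity formula $\cs_p(\F,C)=\sum_{i\in I}\cs_p(\F,C_i)+2\sum_{i<j,\,i,j\in I}(C_i,C_j)_p$ yields
\[
\cs_p(\F,C)=-\sum_{\substack{i<j\\ i,j\in I}}\frac{(n_i-n_j)^2}{n_in_j}(C_i,C_j)_p-\sum_{i\in I}\sum_{j\notin I}\frac{n_j}{n_i}(C_i,C_j)_p\le 0,
\]
which is exactly the estimate your sign argument requires.
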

\par Finally, we state an important criteria to find meromorphic first integrals for webs due to Pan \cite[Lemme 4.2]{Pan}.
\begin{proposition}\label{meromor}
Let $\mathcal{W}$ be a holomorphic $d$-web on a compact complex surface $X$ and let $W$ be the complex subvariety associated to $\W$.
Let $\sigma:Y\to W$ be a resolution of singularities of $W$. Denote by $\tilde{\F}$ the lifting of the foliation $\F_W$ under $\sigma$. Suppose that the restriction of $\tilde{\F}$
to each connected component of $Y$ admits a meromorphic first integral. Then $\W$ admits a multiple-valued meromorphic first integral on $X$.
\end{proposition}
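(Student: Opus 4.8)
The plan is to take, for each irreducible factor of $\W$, the ``norm'' of the given first integral along the generically finite covering $W\to X$, and then to verify that the level sets of this norm are leaves of the web. I would first reduce to a single component. Since $X$ is a compact complex surface, $\mathbb{P}T^{*}X$ is a compact complex threefold and hence $W$ is a compact complex surface; its irreducible components $B_{1},\dots,B_{r}$ are exactly the irreducible factors $\W_{1},\dots,\W_{r}$ of $\W$, and the resolution $\sigma\colon Y\to W$ may be taken as the disjoint union of resolutions $\sigma_{k}\colon Y_{k}\to B_{k}$, so that the connected components of $Y$ are the $Y_{k}$. By the very definition of a multiple-valued meromorphic first integral it suffices to construct, for each $k$, a polynomial $P_{\W_{k}}\in\C(X)[t]$ whose generic level sets have leaves of $\W_{k}$ as connected components; then $P=\prod_{k}P_{\W_{k}}$ is the desired first integral of $\W$. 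Fix $k$, write $B=B_{k}$, let $g\in\C(Y_{k})$ be the given meromorphic first integral of $\tilde\F|_{Y_{k}}$, and set $\rho:=\pi_{W}\circ\sigma_{k}\colon Y_{k}\to X$, a proper, generically $d_{k}$-to-one morphism of compact complex surfaces, where $d_{k}$ is the number of sheets of $B$ over $X$ (so $\sum_{k}d_{k}=d$).

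To build $P_{\W_{k}}$, note that over the regular part $X_{0}$ of the web the map $\rho$ is an unramified $d_{k}$-sheeted covering, so $\rho^{-1}(x)=\{y_{1}(x),\dots,y_{d_{k}}(x)\}$ varies holomorphically with $x$ and the coefficients of
\[
P_{\W_{k}}(x,t):=\prod_{j=1}^{d_{k}}\bigl(t-g(y_{j}(x))\bigr)
\]
are elementary symmetric functions of holomorphic functions, hence holomorphic on $X_{0}$. To globalise these coefficients, consider the closure $Z\subset X\times\mathbb{P}^{1}$ of the set $\{(\rho(y),g(y)):y\in Y_{k}\}$ (where $g$ is viewed as a map to $\mathbb{P}^{1}$, defined off a finite set). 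Because $Y_{k}$ is compact, Remmert's proper mapping theorem makes $Z$ a compact analytic surface, generically finite over $X$. A compact analytic hypersurface of $X\times\mathbb{P}^{1}$ that is generically finite over $X$ is defined, over the generic point of $X$, by a single monic polynomial in $\C(X)[t]$ — the minimal polynomial of the coordinate $t$ over the meromorphic function field $\C(X)$ — and choosing $P_{\W_{k}}$ to be this polynomial yields an element of $\C(X)[t]$ whose roots over a generic $x$ are exactly the values $\{g(y):y\in\rho^{-1}(x)\}$; thus $P_{\W_{k}}$ agrees up to radical with the local expression above.

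To check the level sets, observe that $x\in\{P_{\W_{k}}(\cdot,t_{0})=0\}$ exactly when $t_{0}$ is a value of $g$ on $\rho^{-1}(x)$, so $\{P_{\W_{k}}(\cdot,t_{0})=0\}=\overline{\rho(g^{-1}(t_{0}))}$. For generic $t_{0}\in\C$ the fibre $g^{-1}(t_{0})$ is reduced and, being contained in a fibre of the first integral $g$, is a union of leaves of $\tilde\F|_{Y_{k}}$, each irreducible component of it being a leaf. The birational morphism $\sigma_{k}$ carries these leaves to leaves of $\F_{W}|_{B}$, and over $X_{0}$ the covering $\pi_{W}$ carries the leaves of $\F_{W}$ to the local regular foliations $\F_{i}$ underlying $\W_{k}$, whose leaves are by definition leaves of $\W_{k}$. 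Hence $\rho(g^{-1}(t_{0}))\cap X_{0}$ is $\W_{k}$-invariant, and after deleting from $X$ the analytic set $\triangle_{P_{\W_{k}}}$ formed by the caustic $\Delta$ of the web, the branch locus of $\rho$, and the $\rho$-images of the curves contracted by $\rho$, every connected component of $\{P_{\W_{k}}(\cdot,t_{0})=0\}$ is a leaf of $\W_{k}$. Performing this for each $k$ and setting $P=\prod_{k}P_{\W_{k}}$ gives the multiple-valued meromorphic first integral.

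The step I expect to be the main obstacle is the global descent in the construction of $P_{\W_{k}}$: a priori the symmetric functions of $g$ along the fibres of $\rho$ are only holomorphic away from the discriminant curve and from the locus where $\rho$ fails to be finite (where $\rho$ may contract curves of $Y_{k}$), and a meromorphic function need not extend across a curve. This is precisely where one uses the compactness of $Y_{k}$, inherited from that of $\mathbb{P}T^{*}X$: it lets one realise $P_{\W_{k}}$ as the defining polynomial of a \emph{compact} analytic hypersurface of $X\times\mathbb{P}^{1}$, whose coefficients are automatically meromorphic on all of $X$.
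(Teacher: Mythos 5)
The paper offers no proof of this proposition --- it is quoted as Pan's Lemme 4.2 --- so there is no internal argument to compare against; your norm/resultant descent is the standard proof of such statements and is essentially correct. Two points deserve to be made explicit. First, in your ``main obstacle'' paragraph the real reason the symmetric functions descend is condition $(5)$ in the definition of a web: $\pi_W^{-1}(\zeta)$ has dimension $0$ for \emph{every} $\zeta$, so $\pi_W$ is a finite map and the only positive-dimensional fibres of $\rho=\pi_W\circ\sigma_k$ are exceptional curves of the resolution, which lie over finitely many points of $X$. Consequently $Z\subset X\times\mathbb{P}^1$ has no vertical component and fails to be finite over $X$ only over a finite set, across which the coefficients of the Weierstrass polynomial extend by Levi's extension theorem (codimension two); without this finiteness the extension could indeed break down along a curve, as you feared. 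Second, your construction implicitly shows that the hypotheses force $\C(X)\neq\C$: if the symmetric functions were all constant, a generic value $t_0$ of the nonconstant first integral $g$ would satisfy $\rho\bigl(g^{-1}(t_0)\bigr)=X$, which is impossible for a curve in $Y_k$; this is worth noting since $X$ is only assumed to be a compact complex surface. The remaining steps --- replacing $\prod_j\bigl(t-g(y_j(x))\bigr)$ by the minimal polynomial of $t$ on $Z$ (which changes $P_{\W_k}$ only up to radical and not its level sets), and discarding the caustic, the branch locus and the images of contracted curves to form $\triangle_{P_{\W_k}}$ --- are fine as written.
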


\section{Theorem \ref{first}}\label{primer}
We can now formulate and prove our first result.
\begin{maintheorem}\label{first}
Let $\W=(W,\pi_W,\F_W,\Gamma_W)$ be a holomorphic $d$-web on a compact complex surface $X$ tangent to a real-analytic Levi-flat hypersurface  $M\subset X$ such that the discrimimant curve $\Delta(\W)$ is contained in $M$. Suppose that the critical curve $\Gamma_W$ of $\W$ is invariant by the foliation $\mathcal{F}_W$ and the self-intersection $\Gamma_{W}\cdot\Gamma_W>0$. Then there exists an irreducible component $B$ of $W$ such that $\W$ is non-dicritical on $B$. Moreover, there exists a point $\zeta\in\Delta(\W)$ such that $\mathcal{W}$ admits a local multiple-valued meromorphic first integral at $\zeta$.
\end{maintheorem}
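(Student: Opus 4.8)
The plan is to lift the whole configuration to the projectivised cotangent bundle and transfer the problem to a single holomorphic foliation, where Theorem~\ref{second} applies. First I would use Proposition~\ref{tange} to obtain a real-analytic Levi-flat hypersurface $N\subset W$ tangent to $\F_W$; here $W\subset\mathbb{P}T^{*}X$ is the complex surface carrying $\W$, and $\F_W$ is the foliation induced by the tautological contact form. The next step is to identify a compact curve inside $N$ on which to run the Camacho--Sad machinery. The natural candidate is the critical set $\Gamma_W$: by hypothesis it is $\F_W$-invariant with $\Gamma_W\cdot\Gamma_W>0$. The key point to verify is that (a suitable component of) $\Gamma_W$ lies in $N$; this should follow from the assumption $\Delta(\W)\subset M$, since $\Gamma_W$ projects onto $\Delta(\W)\subset M$ under $\pi_W$ and the contact elements along $\Gamma_W$ are precisely the complex tangent directions recorded by the lift $M'$, so that $\Gamma_W$ (or its relevant component) is contained in $\overline{M'}\subset N$.

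Once a compact $\F_W$-invariant curve $C\subset\Gamma_W\cap N$ with $C\cdot C>0$ is in hand, I would pass to an irreducible component: since $\Gamma_W\cdot\Gamma_W>0$, at least one irreducible component $C_0$ of $\Gamma_W$ has $C_0\cdot C_0>0$ (if every self-intersection were $\le 0$ one could still have positive total intersection, so here I would instead argue on the component through which the positivity is realised, or replace $X$ by a model where $\Gamma_W$ is irreducible). Applying Theorem~\ref{second} to $\F_W$ on $W$ (after resolving the singularities of $W$ if necessary, so that the ambient surface is smooth) with the invariant curve $C_0$ yields a dicritical singularity $q\in\sing(\F_W)\cap C_0$ at which $\F_W$ has a non-constant meromorphic first integral. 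The dicriticity of $\F_W$ at a point of a component $B'$ of $W$ is exactly the statement that $\W$ is non-dicritical on the corresponding component $B$ in the sense of Definition~\ref{dicri} --- here one must be careful that the two notions of dicriticity are dual to each other: $\F_W$ being \emph{singular} on $B'$ is what Definition~\ref{dicri} calls non-dicriticity of $\W$ on $B$. So the existence of the singular point $q$ on $B'$ gives the first assertion.

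For the second assertion, I would push the local first integral of $\F_W$ near $q$ down to $X$. The point $\zeta=\pi_W(q)\in\Delta(\W)$ is the required point of the discriminant. Near $q$, the foliation $\F_W$ decomposes $\W$ into its local leaves, and a meromorphic first integral $h$ for $\F_W$ at $q$ restricts on each local sheet of $\pi_W$ to give the local first integrals of the components of $\W$; taking the product (or the appropriate symmetric function) of the branch values produces a polynomial $P\in\C(X)[t]$ whose level sets contain the leaves of $\W$ near $\zeta$, which is the definition of a local multiple-valued meromorphic first integral. Alternatively, and more cleanly, one invokes Proposition~\ref{meromor}: resolving $W$ and noting that the lifted foliation has a meromorphic first integral on the relevant component (Cerveau--Lins~Neto's theorem then propagates it along the leaves of the Levi foliation of $N$) gives the multiple-valued meromorphic first integral for $\W$, which in particular holds locally at $\zeta$.

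\textbf{Main obstacle.} The delicate step is the second one: extracting from $\Gamma_W\cdot\Gamma_W>0$ a single irreducible $\F_W$-invariant component with positive self-intersection that is genuinely contained in the Levi-flat $N$, and matching the two opposite conventions for ``dicritical'' (Definition~\ref{dicri} for webs versus the singularity notion for foliations) so that the conclusion ``$\W$ is non-dicritical on $B$'' is correctly oriented. I expect the bookkeeping of which component of $W$ the singular point $q$ lives on, and the verification that $N$ actually meets that component in a curve with the right self-intersection (this may require the projection formula relating $\Gamma_W\cdot\Gamma_W$ in $W$ before and after resolution), to be where most of the technical work lies.
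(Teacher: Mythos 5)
Your proposal follows essentially the same route as the paper: lift via Proposition~\ref{tange} to get the Levi-flat $N\subset W$ tangent to $\F_W$, observe that $\Delta(\W)\subset M$ forces $\Gamma_W\subset N$, apply Camacho--Sad positivity together with Theorem~\ref{second} to produce a singular (indeed dicritical) point of $\F_W$ on $\Gamma_W$ with a meromorphic first integral, read off non-dicriticality of $\W$ on the component of $W$ carrying that singularity, and descend via Proposition~\ref{meromor}. The only cosmetic difference is that the paper first invokes Camacho--Sad directly to get $\sing(\F_W)\cap\Gamma_W\neq\emptyset$ (hence the non-dicritical component) and then separately applies Theorem~\ref{second} for the first integral, whereas you extract both from Theorem~\ref{second} at once; the irreducibility bookkeeping you flag as the main obstacle is in fact glossed over in the paper as well.
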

\begin{proof}
 After resolution of singularities, we can assume that $W$ is smooth. 
 By Proposition \ref{tange}, there exists a real-analytic Levi-flat hypersurface $N\subset W$ tangent to $\F_W$. Since $\Gamma_W$ is an invariant complex curve by $\F_{W}$, Camacho-Sad's formula implies 
$$\sum_{q\in\Gamma_W\cap Sing(\F_W)}CS_q(\F_W,\Gamma_W)=\Gamma_W\cdot\Gamma_W>0.$$
Therefore, $\Gamma_W\cap Sing(\F_W)\neq \emptyset$. Let $q\in\Gamma_W\cap Sing(\F_W)$ and fix an irreducible component $B$ of $W$ such that $q\in B$. Because $\F_{W}$ has singularities along $B$, it follows that $\W$ is non-dicritical on $B$ (in the sense of the Definition \ref{dicri}). 
\par On the other hand, since the discrimimant curve $\Delta(\W)$ is contained in $M$ we have $\Gamma_W\subset N$.
Now, notice that we are in the conditions of Theorem \ref{second}, because
$\F_W$ is tangent to $N$, $\Gamma_W\subset N$ and $\Gamma_W\cdot\Gamma_W>0$, then the foliation $\F_W$ has a local non-constant meromorphic first integral at $q\in \Gamma_W$. Hence, $\W$ admits a multiple-valued meromorphic first integral at $\pi(q)=\zeta\in\Delta(\W)$ by Proposition \ref{meromor}. This suffices to prove the theorem.
\end{proof}
Next, we will give an example in $\mathbb{P}^2$ where the conditions of Theorem \ref{first} are satisfied.
\begin{example}
Let $\mathcal{W}$ be the 2-web in $\mathbb{P}^2$ defined in affine coordinates $(x,y)\in\mathbb{C}^2$ by
\[\omega=ydy^2+xdy dx\]
In this chart, $\mathcal{W}=(xdx+ydy)\boxtimes dy$. Then the local leaves of $\mathcal{W}$ are given by superposition of the curves  $x^2+y^2=c_1$ and $y=c_2$, where $c_1,c_2\in\mathbb{C}$. The discriminant curve is $\Delta(\W)=\{x=0\}$.

Consider the real-analytic Levi-flat hypersurface \[M=\{\Re e(x^2+y^2)=0\}\cup\{\Re e(y)=0\}.\]
Observe that $M$ extends to a Levi-flat hypersurface on $\mathbb{P}^2$ and clearly $\Delta(\W)\subset M$. Furthermore, it is tangent to $\W$. 
The subvariety $W$ associated to $\mathcal{W}$ is given by
\[W=\{(x,y,p):yp^2+xp=0\},\] where $p=\frac{dy}{dx}$. The criminant curve  is
 $\Gamma_{W}=\Gamma_1\cup\Gamma_2$, where $\Gamma_1=\{p=x=0\}$ and $\Gamma_2=\{x=y=0\}$ 
are irreducible components of $\Gamma_W$. The foliation $\mathcal{F}_{W}$ is defined by
\[\alpha= \left.(dy-pdx)\right|_{W}= -xdp+p(p^2+1) dx,\]
and it admits a meromorphic first integral 
\[f(x,p)=\frac{x^2(p^2+1)}{p^2}.\] It is not difficult to see that $\W$ is non-dicritical on $\Gamma_2$.

\end{example}
\section{Theorem \ref{second2}}\label{segundo}
We have thus state and prove our second result.
\begin{secondtheorem}\label{second2}
Let $\mathcal{W}=(W,\pi_W,\F_W,\Gamma_W)$ be a smooth dicritical holomorphic $d$-web on a compact complex surface $X$ tangent to a real analytic Levi-flat hypersurface $M\subset X$ such that the discriminant curve $\Delta(\W)\subset M$. Suppose that $X\setminus\Delta(\W)$ is a Stein manifold and the critical curve $\Gamma_W$ is not invariant by $\mathcal{F}_{W}$.
Then $\W$ admits a global multiple-valued meromorphic first integral. 
\end{secondtheorem}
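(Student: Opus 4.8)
The overall strategy is to reduce the problem to the lifted picture on $W$ via Proposition \ref{tange}, and then to apply the hypotheses of dicriticity, Steinness and non-invariance of $\Gamma_W$ to force the foliation $\F_W$ to admit a meromorphic first integral on $W$; Proposition \ref{meromor} then upgrades this to a global multiple-valued meromorphic first integral for $\W$ on $X$. More precisely, I would first invoke Proposition \ref{tange} to obtain a real-analytic Levi-flat hypersurface $N\subset W$ tangent to $\F_W$. Since $\W$ is smooth, $W=W'$ is a smooth complex surface, so no resolution is needed, and $\F_W$ is a genuine holomorphic foliation on $W$. The assumption that $\W$ is dicritical means, by Definition \ref{dicri}, that the restriction of $\F_W$ to (the regular part of) each irreducible component $B$ of $W$ is non-singular; since $W$ is smooth this says $\F_W$ has no singularities on $W$ at all, i.e. $\F_W$ is a regular foliation of $W$.

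Next I would exploit the non-invariance of the critical curve $\Gamma_W$. Recall $\Gamma_W$ is precisely the locus where $d\pi_W$ fails to be an isomorphism, and $\Delta(\W)=\pi(\Gamma_W)$. On $W_0=W\setminus\Gamma_W$ the projection $\pi_W$ is a $d$-fold covering onto $X_0=X\setminus\Delta(\W)$, and the leaves of $\F_W|_{W_0}$ project to the $d$ local leaves of $\W$. Since $\Delta(\W)\subset M$, we have $\Gamma_W\subset N$ as in the proof of Theorem \ref{first}. The key point is that a regular foliation on the Stein manifold $W_0$ (note $\pi_W\colon W_0\to X_0$ is an unramified cover and $X_0$ is Stein, hence $W_0$ is Stein) which is tangent to a Levi-flat hypersurface admits a meromorphic first integral: one applies the Cerveau--Lins Neto local theorem (\cite[Theorem 1]{alcides}) along the leaves of the Levi foliation of $N$, and the Steinness of $W_0$ together with the non-dicritical behaviour away from $\Gamma_W$ allows these local meromorphic first integrals to be patched into a global one on $W_0$. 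Finally, because $\Gamma_W$ is not invariant by $\F_W$, the foliation extends across $\Gamma_W$ without creating an obstruction to extending the first integral — the indeterminacy set of the putative meromorphic function stays contained in $\Gamma_W$, which has codimension one and meets the leaves transversally — so the first integral extends meromorphically to all of $W$.

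With a meromorphic first integral for $\F_W$ on each connected component of $W$ in hand, Proposition \ref{meromor} (taking $\sigma$ to be the identity, since $W$ is already smooth) immediately yields that $\W$ admits a multiple-valued meromorphic first integral on $X$, and this is global by construction. I expect the main obstacle to be the patching step: producing a single global meromorphic first integral on the Stein surface $W_0$ from the local ones guaranteed by \cite{alcides}, and then controlling the extension across the non-invariant critical curve $\Gamma_W$. This is where Steinness is essential — it gives the vanishing of the relevant cohomology (or, concretely, enough global meromorphic functions) needed to glue the local first integrals, exactly as in the classical arguments for foliations tangent to Levi-flat hypersurfaces. Controlling the behaviour of the first integral along $\Gamma_W$ requires a local analysis near points of $\Gamma_W$, using that $\F_W$ is regular there and transverse to $\Gamma_W$.
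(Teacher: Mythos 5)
Your opening moves coincide with the paper's: smoothness plus dicriticity make $\F_W$ a regular foliation on the smooth surface $W$, Proposition \ref{tange} gives the Levi-flat $N\subset W$ tangent to $\F_W$, and $\Delta(\W)\subset M$ gives $\Gamma_W\subset N$. The gap is in your central step. You propose to produce the first integral by patching local Cerveau--Lins Neto first integrals over the Stein manifold $W_0=W\setminus\Gamma_W$, with ``Steinness gives the vanishing of the relevant cohomology'' doing the gluing. No such general gluing mechanism exists: the obstruction to globalizing local meromorphic first integrals of a foliation is its holonomy/monodromy, not a coherent-sheaf cohomology group, and Steinness alone cannot suffice. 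Indeed, a \emph{regular} foliation has a local holomorphic first integral at every point for trivial reasons, so if your patching argument worked it would show that every regular foliation of a Stein surface has a global meromorphic first integral --- false already for $dy-\lambda\,dx$ on $\C^2$ with $\lambda\notin\Q$. The subsequent ``extension across $\Gamma_W$'' is likewise asserted rather than proved. So the proposal, as written, does not close.

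The paper's actual mechanism, which you are missing, is to use the non-invariant critical curve $\Gamma_W$ as a global transversal, i.e.\ as a leaf space for $\F_W$. At a point $\tilde\zeta\in\Gamma_W\subset N$, Cartan's normal form for $N$ together with regularity of $\F_W$ and non-invariance of $\Gamma_W$ give coordinates in which $\F_W$ is $dx=0$, $N=\{\mathrm{Im}(x)=0\}$ and $\Gamma_W=\{y=0\}$; sending a point to the intersection of its local leaf with $\Gamma_W$ defines a local holomorphic fibration $\tilde f\colon U_{\tilde\zeta}\to\Gamma_W$, and these fibrations glue canonically over a neighborhood of $\Gamma_W$. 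Steinness of $X\setminus\Delta(\W)$, hence of its $d$-fold cover $W\setminus\Gamma_W$, is then used not for cohomology vanishing but to guarantee that every leaf of $\F_W$ meets $\Gamma_W$, and does so in a single point; this extends $\tilde f$ to a global fibration $f\colon W\to\Gamma_W$ whose fibers are the leaves. Composing with any nonconstant rational map $g\colon\Gamma_W\dashrightarrow\PP^1$ yields the meromorphic first integral $h=g\circ f$ of $\F_W$, and Proposition \ref{meromor} (with $\sigma$ the identity, as you correctly note) converts this into a global multiple-valued meromorphic first integral for $\W$. Your instinct that the Levi-flat $N$ and the Steinness are the two working hypotheses is right, but they enter through this fibration construction, not through Cerveau--Lins Neto plus a patching argument.
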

\begin{proof}
Since $\W$ is smooth and dicritical we have $W$ is a (smooth) complex surface and 
$\mathcal{F}_{W}$ is a regular foliation on $W$. By Proposition \ref{tange}, there exists a real-analytic Levi-flat hypersurface $N\subset W$ tangent to $\F_W$. Because $\Delta(\W)\subset M$, we have $\Gamma_W\subset N$. Let $\Tilde{\zeta} \in \Gamma_W \subset N$. Using Cartan's Theorem \cite[Theorem IV]{cartan} and the fact that $\Gamma_W$ is not invariant by $\mathcal{F}_W$, we have local holomorphic coordinates $(x,y)\in U\subset W$, centered at $\tilde{\zeta}$, such that \[\Gamma_W\cap U=\{y=0\}\,\,\,\,\,\,\text{and}\,\,\,\,\,\, N\cap U=\{\Im m(x)=0\},\]
so $\left.\mathcal{F}_W\right|_U$ is given by $dx=0$. Because $\mathcal{F}_W$ is a regular foliation, for every $\Tilde{\zeta} \in \Gamma_W$ there is only one leaf $L_{\Tilde{\zeta}}$ of $\mathcal{F}_W$ passing by $\Tilde{\zeta}$. Thus, $\mathcal{F}_W$ is locally defined by a fibration $\Tilde{f}:U_{\Tilde{\zeta}} \rightarrow \Gamma_W$ in some open neighborhood $U_{\Tilde{\zeta}} \subset W$ of each point $\Tilde{\zeta} \in \Gamma_W$ and therefore $\tilde{f}(L_{\tilde{\zeta}})=L_{\tilde{\zeta}}\cap \Gamma_W.$
\par Consequently, via collage, we find an open neighborhood $U \subset W$ of $\Gamma_W$ such that $\mathcal{F}_W$ is defined by a fibration $\Tilde{f}: U \rightarrow \Gamma_W$. By assumption, $X\setminus\Delta(\W)$ is a Stein manifold, therefore $W\setminus\Gamma_W$ is also a Stein manifold, since 
$\pi_W: W\setminus\Gamma_W\rightarrow X\setminus\Delta(\W)$ is a $d$-fold covering. Thus $\Tilde{f}$ extends to a global fibration $f: W \rightarrow \Gamma_W$. Essentially, this fibration works as a projection of leaves of $\mathcal{F}_{W}$ onto the curve $\Gamma_W$. In fact, since $W\setminus\Gamma_W$ does not contain complex analytic sets of positive dimensions, every leaf of $\mathcal{F}_{W}$ intersects $\Gamma_W$ at a single point, because $\mathcal{F}_{W}$ is a regular foliation. Finally, we pick any rational map $g: \Gamma_W \dashrightarrow \PP^1$ and $h:=g\circ f$. Hence, $h$ defines a meromorphic first integral for $\F_W$, and $\mathcal{W}$ admits a multiple-valued meromorphic first integral, by Proposition \ref{meromor}. This concludes the proof of Theorem \ref{second2}.
\end{proof}
We present the following example, where Theorem \ref{second2} applies. 
\begin{example}\label{example2}
Consider the family of hyperplanes $H_t:=\{X+tY+t^2Z=0; t \in \C\}$ in coordinates $(X,Y,Z) \in \C^3$. By eliminating $t$ in the system $G_t=dG_t=0$ (that is, calculating the resulting $R[G_t,dG_t]$, we obtain a 2-web germ, which extends to the global 2-web $\mathcal{W}$ of degree 0 in $\PP^2$, whose leaves are the hyperplanes $H_t$, with $t \in \C$. Furthermore, $\mathcal{W}$ is tangent to the Levi-flat hypersurface defined by elimination of $t$ of the system $G_t=\overline{G_t}=0$, whose leaves of the Levi foliation are the hyperplanes $H_t$, with $t \in \R$.

In homogeneous coordinates $[X:Y:Z]$, the 2-web $\mathcal{W}$ is given by the 2-symmetric form defined by
\[\omega=\det\left(\begin{array}{cccc}
    X & Y & Z & 0 \\
    0 & X & Y & Z\\
    dX & dY & dZ & 0\\
    0 & dX & dY & dZ
\end{array}
\right),
\]
that is, 
\[\omega=Z^2(dX)^2+XZ(dY)^2+X^2(dZ)^2+ (Y^2-2XZ)dXdZ-XYdYdZ-YZdXdY.\]
In the chart $Z=1$, we denote by $(x,y)$ the affine coordinates of $\PP^2$, so we have $$\mathcal{W}: (dx)^2-ydxdy+x(dy) ^2=0.$$ The associated subvariety
$W=\{xp^2-yp+1=0\}$ is smooth, here $p=\frac{dy}{dx}$. The criminal curve is defined by 
$\Gamma_W=\{xp^2-1=y-2xp=0\}$, is also smooth. Also, $\mathcal{W}$ is dicritical because $\mathcal{F}_W$ is regular on $\Gamma_W$. Furthermore, $\Gamma_W$ is not invariant by $\mathcal{F}_W$. By construction, $\mathcal{W}$ admits the global multiple-valued meromorphic first integral
$$P([X:Y:Z],t)=t^2Z+tY+X.$$
\end{example}
\section{Webs on the complex projective space}\label{projective-space}
Let $\W=[\omega]\in\mathbb{P}H^{0}(\mathbb{P}^n,Sym^{d}\Omega_{\mathbb{P}^n}^{1}\otimes\mathcal{N})$ be a holomorphic $d$-web on $\mathbb{P}^n$. The \textit{degree of $\W$} is defined as the number of tangencies, counted with multiplicities, of $\W$ with a line not everywhere tangent to $\W$, (see for instance \cite{JV}). Recall that every line-bundle $\mathcal{L}$ on $\mathbb{P}^n$ is an integral multiple of $\mathcal{O}_{\mathbb{P}^n}(1)$  and consequently one can write $\mathcal{L}=\mathcal{O}_{\mathbb{P}^n}(\deg{\mathcal{L}})$. Using the identity $Sym^{d}\Omega_{\mathbb{P}^{1}}^1=\mathcal{O}_{\mathbb{P}^1}(-2d)$ we obtain
\[\deg(\W)=\deg(\mathcal{N})-2d.\]
\par Let $\check{\mathbb{P}}^{n}$ denote the projective space parametrizing hyperplanes in $\mathbb{P}^n$.
\subsection{Algebraic webs}
Given a projective curve $C\subset\mathbb{P}^n$ of degree $d$ and a hyperplane 
$H_0\in\check{\mathbb{P}}^{n}$ intersecting $C$ transversely, there exists a natural germ of quasi-smooth $d$-web $\W_C:=\W_C(H_0)$ on $(\check{\mathbb{P}}^n,H_0)$ defined by the submersions $p_1,\ldots,p_d:(\check{\mathbb{P}}^n,H_0)\to C$ which describe the intersections of $H\in(\check{\mathbb{P}}^n,H_0)$ with $C$. The $d$-webs of the form $\W_C(H_0)$ for some reduced projective curve $C$ and some transverse hyperplane $H_0$ are classically called \textit{algebraic webs}. For more details about the construction of algebraic webs, we refer the reader to \cite[Section 1.1.3]{JV}.
\par Let $\mathcal{I}\subset\mathbb{P}\times\check{\mathbb{P}}^n$ be the incidence variety, that is 
\[\mathcal{I}=\{(p,H)\in\mathbb{P}\times\check{\mathbb{P}}^n:p\in H\}.\]
We denote $\pi$ and $\check{\pi}$ the natural projections from $\mathcal{I}$ to $\mathbb{P}^n$ and $\check{\mathbb{P}}^n$. According to \cite[Proposition 1.4.1]{JV} the incidence variety $\mathcal{I}$ is naturally isomorphic to $\mathbb{P}T^{*}\mathbb{P}^n$
and also to $\mathbb{P}T^{*}\check{\mathbb{P}}^n$. Using this identification of $\mathcal{I}$ with $\mathbb{P}T^{*}\check{\mathbb{P}}^n$ one defines for every projective curve $C\subset\mathbb{P}^n$ its dual web $\W_C$ as the one defined by variety $\pi^{-1}(C)\subset\mathbb{P}T^{*}\check{\mathbb{P}}^n$ and it can be seen as a multi-section of $\check{\pi}:\mathbb{P}T^{*}\check{\mathbb{P}}^n\to \check{\mathbb{P}}^n$. It is evident that the germification of this global web at a generic point $H_0\in \check{\mathbb{P}}^n$ coincides with the germ of web $\W_C(H_0)$ defined as above. 
\par We will use the following characterization of algebraic webs given in \cite[Proposition 1.4.2]{JV}.
\begin{proposition}\label{degw}
Let $C\subset\mathbb{P}^n$ be a projective curve of degree $d$. Then $\mathcal{W}_C$ is a $d$-web of degree zero on $\check{\mathbb{P}}^n$. Recipocally, if $\mathcal{W}$ is a $d$-web of degree zero in $\Check{\mathbb{P}}^n$, then there exists $C \subset \mathbb{P}^n$, a projective curve of degree $d$, such that $\mathcal{W}=\mathcal{W}_C$.
\end{proposition}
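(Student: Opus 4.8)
The plan is to work on the incidence variety $\mathcal{I}=\{(p,H)\in\mathbb{P}^n\times\check{\mathbb{P}}^n:p\in H\}$, using the isomorphisms $\mathcal{I}\cong\mathbb{P}T^{*}\mathbb{P}^n\cong\mathbb{P}T^{*}\check{\mathbb{P}}^n$ of \cite[Proposition 1.4.1]{JV}, and to read the degree of a $d$-web on $\check{\mathbb{P}}^n$ off the way its associated pure $n$-dimensional subvariety $W\subset\mathbb{P}T^{*}\check{\mathbb{P}}^n\cong\mathcal{I}$ projects to the first factor by $\pi$. The two directions then split as follows: for $\W_C$ one shows that the associated variety is $\pi^{-1}(C)$ and uses that $C$ is a curve to make the degree vanish; conversely, degree zero will be shown to force $\pi(W)$ to be a curve, and then $W$ to be the full preimage of that curve.

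For the direct statement, under $\mathcal{I}\cong\mathbb{P}T^{*}\check{\mathbb{P}}^n$ the subvariety defining $\W_C$ is $\pi^{-1}(C)=\{(p,H):p\in C\cap H\}$. It fibres over $C$ with fibre $\check{p}\cong\mathbb{P}^{n-1}$ over $p\in C$, so it has pure dimension $n$; the map $\check{\pi}$ restricted to it is a $d$-fold covering over the open set of hyperplanes meeting $C$ transversely, which is exactly where $\deg C=d$ is used; and, since the conormal lift of a hyperplane $\check{p}\subset\check{\mathbb{P}}^n$ in $\mathbb{P}T^{*}\check{\mathbb{P}}^n$ corresponds to $\{p\}\times\check{p}\subset\mathcal{I}$, the variety $\pi^{-1}(C)$ is swept out by these Legendrian $(n-1)$-folds, which yields integrability of the tautological contact form along it and identifies the leaves of $\W_C$ on $\check{\mathbb{P}}^n$ with the hyperplanes $\check{p}$, $p\in C$. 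To obtain $\deg(\W_C)=0$ I would use the definition of degree as the number of tangencies with a generic line $\ell\subset\check{\mathbb{P}}^n$: such an $\ell$ is a pencil of hyperplanes with base locus a linear subspace $\check{\ell}\subset\mathbb{P}^n$ of codimension two, the tangent space to the hyperplane $\check{p}$ at any of its points equals $\check{p}$, and therefore $\ell$ can be tangent to the leaf $\check{p}$ only if $\ell\subset\check{p}$, i.e. only if $p\in C\cap\check{\ell}$; for generic $\ell$ the subspace $\check{\ell}$ is a generic codimension-two linear subspace and $C\cap\check{\ell}=\emptyset$ because $\dim C+\dim\check{\ell}<n$, so there are no tangency points and $\deg(\W_C)=0$. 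Equivalently, one may check $\deg\mathcal{N}_{\W_C}=2d$ and invoke $\deg(\W)=\deg(\mathcal{N})-2d$.

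For the converse, let $\W$ be a $d$-web of degree zero on $\check{\mathbb{P}}^n$, $W\subset\mathbb{P}T^{*}\check{\mathbb{P}}^n\cong\mathcal{I}$ its associated subvariety, and $C:=\pi(W)\subset\mathbb{P}^n$. Running the previous computation backwards, a point of $W$ lying over $H\in\ell$ is a contact element of the form $T_H\check{p}$, and it contains $T_H\ell$ exactly when $p\in\check{\ell}$; hence the tangencies of a generic line $\ell$ with $\W$ are the points $(p,H)\in W$ with $p\in\check{\ell}$ and $H\in\ell$, and degree zero means this set is empty for a generic $\ell$. The main obstacle is to deduce from this that $C$ is a curve: if $\dim C\ge 2$ then $C\cap\check{\ell}\ne\emptyset$ for every codimension-two linear subspace $\check{\ell}$, and one must show that this still produces a tangency for a generic $\ell$. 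The transparent case is $\dim C=2$: then the generic fibre of $\pi|_W$ has dimension $n-2$, hence is a hypersurface of $\check{p}\cong\mathbb{P}^{n-1}$ and meets every line of $\check{p}$, so for generic $\ell$ one picks a generic $p\in C\cap\check{\ell}$ together with a point of $W$ over some $H\in\ell$, contradicting degree zero; the case $\dim C\ge 3$ is handled either by the analogous bookkeeping (now $C\cap\check{\ell}$ is itself positive-dimensional) or, more cleanly, by computing the intersection number $[W]\cdot[\check{\ell}\times\ell]$ in the cohomology ring of $\mathcal{I}$ and checking it is positive unless $\dim C\le 1$. Since $\dim W=n$ and $W\subseteq\pi^{-1}(C)$, which has dimension $\dim C+(n-1)$, this forces $\dim C=1$; as $\pi^{-1}(C_j)$ is irreducible of dimension $n$ for each irreducible component $C_j$ of $C$ and $W$ has pure dimension $n$, one gets $W=\pi^{-1}(C)$, hence $\W=\W_C$. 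Finally $\deg C$ is the cardinality of a generic $C\cap H$, which is the covering degree $d$ of $\check{\pi}|_W$, and $C$ is reduced because the symmetric form defining $\W$ is square-free.
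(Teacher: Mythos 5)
First, a point of reference: the paper does not prove this proposition at all --- it is quoted from Pereira--Pirio \cite[Proposition 1.4.2]{JV} --- so you are supplying an argument where the paper supplies only a citation. Your direct implication is correct and is the standard one: the leaves of $\mathcal{W}_C$ are the dual hyperplanes $\check{p}$, $p\in C$; a line $\ell\subset\check{\mathbb{P}}^n$ is tangent to the leaf $\check{p}$ only when $\ell\subset\check{p}$, i.e.\ $p\in\check{\ell}$; and a generic codimension-two linear space $\check{\ell}$ misses the curve $C$. The closing steps of your converse are also fine: once $C=\pi(W)$ is known to be a curve, $\dim W=n$ together with $W\subseteq\pi^{-1}(C)$ and the irreducibility of each $\pi^{-1}(C_j)$ gives $W=\pi^{-1}(C)$ and $\deg C=d$.

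The gap is exactly where you flag it: the claim that $\deg\mathcal{W}=0$ forces $\dim\pi(W)\le 1$ is only proved for $\dim\pi(W)\le 2$. If $\dim\pi(W)=k\ge 3$, the ``analogous bookkeeping'' does not close: the generic fibre $W_p\subset\check{p}\cong\mathbb{P}^{n-1}$ now has dimension $n-k\le n-3$ and misses a generic line of $\check{p}$, and although $C\cap\check{\ell}$ is positive-dimensional, the incidence variety $\{(\ell,(p,H)):(p,H)\in W,\ p\in\check{\ell},\ H\in\ell\}$ has dimension exactly $2n-2=\dim G(2,n+1)$, so whether it dominates the space of lines is not decided by dimension counts. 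The cohomological fallback also does not work as stated: writing $h=\pi^{*}c_1(\mathcal{O}(1))$, $\check{h}=\check{\pi}^{*}c_1(\mathcal{O}(1))$ and $[W]=\sum_i a_i\,h^{i}\check{h}^{\,n-1-i}$, one finds $[W]\cdot[\check{\ell}\times\ell]=a_{n-2}$ (for $n=3$, $[\check{\ell}\times\ell]=h^{2}\check{h}-h^{3}$ and the product is $a_1$), whereas $\dim\pi(W)\le 1$ requires the vanishing of $[W]\cdot h^{n}=a_0$ and of intermediate coefficients as well; the vanishing of the single number $a_{n-2}$ does not imply this for a general pure $n$-dimensional multisection of $\check{\pi}$. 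What rescues the statement is the integrability of $W$, which your converse never uses. The standard route is: degree zero means the tangency divisor of $\mathcal{W}$ on any line is a degree-zero divisor on $\mathbb{P}^1$, hence empty or everything; so a line tangent to a leaf at a generic point is an integral curve of the corresponding branch of the web and lies in that leaf; hence every leaf is a hyperplane $\check{p}$, its Legendrian lift $\{p\}\times\check{p}=\pi^{-1}(p)$ is a leaf of $\mathcal{F}_W$ contained in $W$, so $W$ is a union of $\pi$-fibres and $\dim W=n$ forces $\pi(W)$ to be a curve. You should either run this argument or genuinely prove that $[W]\cdot[\check{\ell}\times\ell]>0$ whenever $\dim\pi(W)\ge 2$, using the foliated structure of $W$ and not just its class.
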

In order to prove the main result of this section, we will enunciate the following result.
\begin{theorem}\cite[Shafikov-Sukhov]{Russians}\label{russians}
Let $M \subset U$ be an irreducible real-analytic Levi-flat hypersurface in a domain $U \subset \C^n$, with $n \geq 2$ and $0 \in \overline{M_{reg}}$. Assume that at least one of the following conditions is true:
\begin{enumerate}
     \item $0 \in M$ is non-dicritical singularity.
     \item $M$ is a real algebraic hypersurface.
\end{enumerate}
Then there is a neighborhood $U'$ of the origin and a holomorphic $d$-web $\mathcal{W}$ in $U'$ such that $\mathcal{W}$ extends the Levi foliation of $M$. Furthermore, $\mathcal{W}$ admits a multiple-valued meromorphic first integral in $U'$.
\end{theorem}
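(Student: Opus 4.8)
The plan is to extend the Levi foliation via the Segre varieties of $M$ together with the lift to the projectivised cotangent bundle set up in Section~\ref{levantamento}, using hypotheses $(1)$ and $(2)$ only to control the number of Segre components near $0$. Write $M=\{F=0\}$ with $F$ real-analytic in a neighbourhood of $0$ (a real polynomial in case $(2)$), let $F_{\C}(z,w)$ be the complexification of $F$, and let $Q_{p}=\{z:F_{\C}(z,\bar p)=0\}$ denote the Segre varieties. I shall use two facts already recorded above: for $p\in M_{reg}$ the Levi leaf $L_{p}$ is contained in the unique irreducible component $S_{p}$ of $Q_{p}$, so that near a generic point of $M_{reg}$ the Levi foliation is the trace on $M$ of the non-singular holomorphic ``Segre foliation'' $p\mapsto S_{p}$; and, by Pinchuk-Shafikov-Sukhov \cite{pinchuk}, hypothesis $(1)$ says precisely that $0$ is not Segre degenerate, that is $\dim Q_{0}=n-1$.

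First I would construct the web. Lift $M_{reg}$ to $M'=\{(\zeta,T^{\C}_{\zeta}M_{reg}):\zeta\in M_{reg}\}\subset\PCot$, noting that along $M_{reg}$ the contact plane $T^{\C}_{\zeta}M_{reg}$ is exactly the tangent plane at $\zeta$ to the Segre component $S_{\zeta}$. By Lemma~\ref{brunella-lemma}, Proposition~\ref{brunella-proposition} and Proposition~\ref{lift}, $\overline{M'}$ is contained in a real-analytic subvariety $N\subset\PCot$ of real dimension $2n-1$ which, in a neighbourhood of the relevant points of $\PCot$ over $0$, lies inside a complex-analytic subvariety $W\subset\PCot$ of complex dimension $n$, with $N$ Levi-flat in $W$ (cf.\ \cite[Lemma~2.2]{burns} and Proposition~\ref{piece}). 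The key point is that $\pi_{W}\colon W\to\C^{n}$ is generically finite onto a neighbourhood $U'$ of $0$, of some degree $d$: under $(1)$, since $0$ is not Segre degenerate the generic Segre variety $Q_{z}$ with $z$ near $0$ is a hypersurface whose number of irreducible components is uniformly bounded, and these components are exactly the branches of the fibre of $\pi_{W}$; under $(2)$ the $Q_{z}$ are algebraic of uniformly bounded degree, which again bounds the number of components. Hence $W$ defines a holomorphic $d$-web $\W$ on $U'$ in the sense of Section~\ref{webs-theory}; and since $M'\subset N\subset W$, the foliation $\F_{W}$ is tangent to $N$ and, projecting by $\pi_{W}$, one sees that one of the $d$ contact elements of $\W$ over each $z\in M_{reg}$ is $T^{\C}_{z}M_{reg}$, that is $\W$ extends $\mathcal{L}_{M}$.

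Next I would extract the first integral directly from the Segre map. Parametrising the irreducible components of the $Q_{z}$, $z\in U'$, by the coefficients of their reduced defining equations (or by points of a Chow variety which embeds into a projective space) gives a multi-valued map $\Sigma$ assigning to $z$ the set of components of $Q_{z}$, with values in a parameter space $\mathcal{C}$, whose graph is $W$ up to resolution of singularities and which is finite over $U'$ by the previous step. Because two points on the same Levi leaf determine the same Segre component, $\Sigma$ is constant along the leaves of each local branch of $\W$; composing a branch of $\Sigma$ with the projective coordinates of $\mathcal{C}$ therefore produces a meromorphic function whose level sets contain those leaves. Since $\Sigma$ is obtained from $F_{\C}$ by elimination --- with $F_{\C}$ polynomial in case $(2)$ --- this yields, for each irreducible factor $\W'$ of $\W$, a polynomial $P_{\W'}$ in one variable with meromorphic (respectively rational, in case $(2)$) coefficients on $U'$; the product of the $P_{\W'}$ is the desired multiple-valued meromorphic first integral. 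Alternatively, one may apply Cerveau-Lins Neto \cite{alcides} to the germ of $\F_{W}$ at each point of $N$ over $0$, patch the resulting germs of meromorphic first integrals (any two are functionally dependent), and conclude by a local analogue of Pan's criterion (Proposition~\ref{meromor}) after resolving $W$.

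The main obstacle is the finiteness step: proving that the Segre varieties $Q_{z}$, for $z$ in a neighbourhood of $0$, have a uniformly bounded number of irreducible components --- equivalently, that $\overline{M'}$ does not spread over an entire fibre of $\PCot\to\C^{n}$ and that $\pi_{W}\colon W\to\C^{n}$ has finite generic fibre. This is exactly where hypothesis $(1)$ enters, through the Pinchuk-Shafikov-Sukhov dichotomy ($\dim Q_{0}=n-1$), or hypothesis $(2)$, through degree bounds; carrying it out requires analysing how the components of $Q_{z}$ and their tangent planes degenerate as $z\to 0$ along $M_{reg}$. A secondary technical difficulty, in the real-algebraic case, is the patching of the local Levi-flat pieces of $N$ into a single algebraic variety $W$ with a well-defined web degree.
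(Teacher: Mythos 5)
This statement is not proved in the paper at all: it is quoted verbatim from Shafikov--Sukhov \cite{Russians} and used as a black box in the proof of Theorem \ref{third}. So there is no internal proof to compare against; what can be compared is your sketch with the known argument, whose key ingredient is in fact visible later in this paper, where \cite[Proposition 4.1]{Russians} is invoked: one chooses a complex line $A\subset\C^n$ with $A\cap Q_0=\{0\}$, $A\not\subset\sing(M)$, such that every Levi leaf near $0$ is contained in some Segre variety $Q_w$ with $w\in A$. Parametrizing $A$ anti-linearly by $t\mapsto w(\bar t)$, the equation $F_{\C}(z,\overline{w(\bar t)})=0$ becomes, after Weierstrass preparation in $t$ (legitimate precisely because $A\cap Q_0=\{0\}$ forces $F_{\C}(0,\overline{w(\bar t)})\not\equiv 0$), a distinguished polynomial $P(z,t)$ of some degree $d$ with holomorphic coefficients; its $d$ root branches define the web and $P$ itself is the multiple-valued first integral. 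Hypotheses (1) and (2) enter exactly to guarantee that such a line exists: non-dicriticality gives $\dim Q_0=n-1$ via the Pinchuk--Shafikov--Sukhov characterization, and algebraicity controls the Segre-degenerate locus otherwise. Your route through $\PCot$, Brunella's local complexification, and a Chow-variety parametrization is genuinely different, and it is not carried to completion.

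The concrete gaps are these. First, the finiteness of $\pi_W$ over a neighbourhood of $0$ --- which you yourself flag as the main obstacle --- is essentially the whole content of the theorem, and your proposed mechanism for it is off: the fibre $\pi_W^{-1}(z)$ consists of the tangent directions at $z$ of the web leaves through $z$, not of the irreducible components of $Q_z$. For $z\in M_{reg}$ the leaf $L_z$ lies in a \emph{single} component $S_z$ of $Q_z$; the number that must be bounded is how many Segre varieties $Q_w$, with $w$ ranging over a transversal, pass through a generic $z$, which is what the Weierstrass degree measures. A uniform bound on the number of components of each individual $Q_z$ does not yield this. Second, the extraction of a polynomial $P\in\C(X)[t]$ from ``projective coordinates on the Chow variety'' is not made precise, and the fallback via Cerveau--Lins Neto plus patching of germs of first integrals requires a gluing/monodromy argument that is absent; Proposition \ref{meromor} (Pan's criterion) is stated here for compact surfaces and global data and cannot simply be invoked for germs. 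Finally, in the dicritical real-algebraic case your degree bound collapses at Segre-degenerate points, where $Q_0=\C^n$ and the fibre of $\overline{M'}$ over $0$ may be all of $\mathbb{P}^{n-1}$, so condition (5) of the web definition is exactly what must be argued, not assumed.
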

\par Real-analytic hypersurfaces induced by an irreducible real-analytic curve in the Grassmannian $G(n+1,n)$ were studied in \cite{Lebl}, in particular, when these hypersurfaces have dicritical singularities, Lebl \cite{Lebl} proved the following important result. 
\begin{theorem} \label{lebl2}
Suppose that $M \subset \mathbb{P}^n$ is a real-analytic hypersurface induced by an irreducible real-analytic curve $S\subset G(n+1,n)$, and suppose that $M$ has a dicritical singular point. Then there is an integer $1 \leq \ell < n$, homogeneous coordinates $[z_0: \cdots: z_n]$, and a bihomogeneous real polynomial $\rho(z_0, \cdots, z_{\ell}, \z_0, \cdots, \z_{\ell})$, such that
\begin{enumerate}
     \item $M$ is the zero set of $\rho$ in $\mathbb{P}^n$. In particular, $M$ is algebraic,
     \item $\rho$ defines a Levi-flat algebraic hypersurface $X \subset \mathbb{P}^{\ell}$, with non-dicritical singularities, induced by a curve in $G(\ell +1, \ell )$.
\end{enumerate}
\end{theorem}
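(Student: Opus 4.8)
This is Lebl's theorem \cite{Lebl}; the plan I would follow reproduces his strategy. The idea is to strip away, by iterated linear projections, the ``vertex'' directions forced by dicriticity, reducing $M$ to a non-dicritical hypersurface in a lower-dimensional projective space, and then to prove that this terminal hypersurface is algebraic.

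First I would show that dicriticity produces a common point of the whole family. Write $M=\bigcup_{s}H_s$, the union of the complex hyperplanes $H_s\subset\mathbb{P}^n$ parametrized by $s$ in the irreducible real-analytic curve $S\subset G(n+1,n)\cong\check{\mathbb{P}}^n$; the leaves of $\mathcal{L}_M$ are open pieces of the $H_s$, so a leaf can have a point $p$ in its closure only if $p\in\overline{H_s}=H_s$. Hence a dicritical singularity $p$ yields an infinite set of parameters $s$ with $p\in H_s$, and since $S$ is a closed (hence compact) real-analytic subvariety this set accumulates in $S$. As $\{H:p\in H\}$ is the dual hyperplane $\check p\subset\check{\mathbb{P}}^n$, a real-analytic hypersurface, the identity principle applied to the irreducible curve $S$ forces $S\subset\check p$: \emph{every} hyperplane of the family passes through $p$. (Equivalently, one invokes the Pinchuk--Shafikov--Sukhov characterization \cite{pinchuk} of dicritical points as Segre-degenerate points.)

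Next I would project from $p$. Taking coordinates with $p=[0:\cdots:0:1]$ and $\pi_p:\mathbb{P}^n\dashrightarrow\mathbb{P}^{n-1}$ the projection from $p$, each $H_s$ is the cone with vertex $p$ over a hyperplane $H'_s\subset\mathbb{P}^{n-1}$, so $s\mapsto H'_s$ presents $S$ as an irreducible real-analytic curve $S'\subset G(n,n-1)\cong\check{\mathbb{P}}^{n-1}$ and $M=\overline{\pi_p^{-1}(M')}$ for $M':=\bigcup_s H'_s$. A defining bihomogeneous real polynomial $\rho'$ of $M'$ in $[z_0:\cdots:z_{n-1}]$ also cuts out $M$ in $[z_0:\cdots:z_n]$ (it simply does not involve $z_n$). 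If $M'$ is again dicritical one repeats; each step lowers the dimension, at least one step is taken (since $M$ is dicritical), and because $M$ has full real dimension $2n-1$ the family never degenerates to a point, so after finitely many steps one lands on an integer $1\le\ell<n$, coordinates $[z_0:\cdots:z_\ell]$, and a real-analytic Levi-flat hypersurface $X\subset\mathbb{P}^\ell$ with \emph{no} dicritical singularity, induced by an irreducible real-analytic curve $S^{(\ell)}\subset G(\ell+1,\ell)$ not lying in any dual hyperplane, with $M$ an iterated cone over $X$. It then remains only to produce the bihomogeneous real $\rho$ defining $X$, i.e. to show $X$ is algebraic; pulling $\rho$ back gives conclusion (1), and (2) holds by construction of $X$.

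The hard part is this last step. The approach I would take is: complexify $S^{(\ell)}$ and take its Zariski closure in the compact space $\check{\mathbb{P}}^\ell$, which by Chow's theorem is a complex algebraic curve; then use that $X$ has the expected real dimension and is non-dicritical --- so that the Segre varieties of $X$ are exactly the hyperplanes of the family --- to identify a defining function of $X$ with a real factor of the bihomogeneous polynomial built from the (now algebraic) incidence relation along that curve. The delicate points throughout are precisely the passage from the \emph{local} dicriticity hypothesis to the \emph{global} conclusion that $S$ lies in a dual hyperplane (where compactness and irreducibility of $S$ are essential), and the algebraicity of the terminal $X$ via the complexification-and-Chow argument together with the Segre-variety bookkeeping; the iterated-projection part is routine.
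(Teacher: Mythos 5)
First, a remark on the comparison itself: the paper does not prove this statement --- it is quoted as Lebl's theorem \cite{Lebl} without proof --- so there is no internal argument to measure you against. Judged on its own terms (and against Lebl's actual proof), the first half of your plan is sound: a dicritical point $p$ forces infinitely many of the hyperplanes $H_s$ to contain $p$ (you should note in passing that finitely many hyperplanes can carry only finitely many leaves, since each $H_s\cap M_{reg}$ has finitely many components), the identity principle on the irreducible curve $S$ gives $S\subset\check p$, hence the common intersection $\Lambda=\bigcap_{s\in S}H_s$ is a nonempty linear subspace, $S$ spans a proper subspace $\check{\Lambda}\cong\check{\mathbb{P}}^{\ell}$ with $1\le\ell<n$, and $M$ is the cone with vertex $\Lambda$ over a non-dicritical hypersurface $X\subset\mathbb{P}^{\ell}$ induced by $S\subset G(\ell+1,\ell)$. (Projecting from $\Lambda$ in one step is cleaner than your point-by-point iteration, which obliges you to re-verify at each stage that the intermediate $M'$ is a closed real-analytic hypersurface, but this is cosmetic.)

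The genuine gap is the algebraicity step, which is the real content of the theorem. Your proposal --- complexify $S^{(\ell)}$, take its Zariski closure, apply Chow --- does not prove anything: the Zariski closure of any set is algebraic by definition, and the issue is whether that closure is still a \emph{curve}, i.e.\ whether $S^{(\ell)}$ lies on a complex algebraic curve at all; moreover Chow's theorem applies only to globally closed complex-analytic subvarieties of projective space, whereas the complexification of a real-analytic curve is merely a germ along it. Worse, the logical dependency in your plan is reversed: you reduce to the non-dicritical $X$ and then try to prove \emph{it} is algebraic, but non-dicriticality is not a source of algebraicity --- a non-dicritical Levi-flat hypersurface induced by a curve in the Grassmannian has no reason to be algebraic, which is exactly why the theorem carries the dicriticality hypothesis. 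The algebraicity comes from the requirement that the cone $M$ be real-analytic \emph{at the vertex} $\Lambda$: in an affine chart centered at a point of $\Lambda$, a local defining function of $M$ depends only on $z_0,\dots,z_{\ell}$ and has zero set invariant under $(z_0,\dots,z_{\ell})\mapsto\lambda(z_0,\dots,z_{\ell})$, $\lambda\in\C^{*}$; hence every bihomogeneous component of its Taylor series vanishes on $M$, Noetherianity of the local ring shows finitely many of these bihomogeneous real polynomials cut out $M$ near the vertex, these extend globally, and irreducibility yields a single $\rho$. This is precisely the real Chow-type cone argument that the present paper spells out for the sets $H_t$ in the proof of Theorem \ref{third}; it must be run at the dicritical point upstairs, not on the complexified curve downstairs.
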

We can now formulate our third result. 
\begin{thirdtheorem}\label{third}
Let $M \subset \mathbb{P}^n$, $n\geq 2$, be a real-analytic Levi-flat hypersurface induced by an irreducible real-analytic curve  $S \subset G(n+1,n)$. Then there exists a projective curve  $C\subset\mathbb{P}^n$ such that $S \subset C$ and the Levi foliation $\mathcal{L}_M$ extends to algebraic web $\mathcal{W}_C$. Moreover, $\mathcal{W}_C$ admits a global multiple-valued meromorphic first integral.
\end{thirdtheorem}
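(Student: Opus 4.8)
The plan is to reduce to the case where $M$ is non-dicritical, extend the Levi foliation locally to a holomorphic web by Shafikov--Sukhov's theorem, then exploit that every leaf is an open piece of a \emph{linear} hyperplane to glue these local webs into an algebraic web $\mathcal{W}_C$ on $\mathbb{P}^n$, and finally obtain the first integral from the fibered structure of $\mathcal{F}_{W}$.

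First I would normalize the situation. Since $M$ is induced by $S\subset G(n+1,n)$, each leaf of $\mathcal{L}_M$ is an open piece of a hyperplane $H_t\subset\mathbb{P}^n$, with $t$ running over the irreducible real-analytic curve $S$; throughout I identify $G(n+1,n)$ with $\check{\mathbb{P}}^n$. If $M$ has a dicritical singular point, Lebl's Theorem \ref{lebl2} lets me replace $M$, after a linear projection $\mathbb{P}^n\dashrightarrow\mathbb{P}^{\ell}$ with $\ell<n$, by a non-dicritical Levi-flat hypersurface induced by a curve in $G(\ell+1,\ell)$; a curve $C$ for the latter then yields, via the linear inclusion $\check{\mathbb{P}}^{\ell}\hookrightarrow\check{\mathbb{P}}^n$, the curve for $M$. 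So I may assume $M$ is non-dicritical, and then Theorem \ref{russians}(1) applies around each point of $\overline{M_{reg}}$: the Levi foliation extends near $M_{reg}$ to a holomorphic $d$-web with a local multiple-valued meromorphic first integral. Equivalently (see also Brunella's Proposition \ref{brunella-proposition}), the lift $M'\subset\mathbb{P}T^{*}\mathbb{P}^n$ of $M_{reg}$ is contained, near each of its points, in a germ of complex-analytic subvariety of complex dimension $n$.

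The main step is the globalization. Using the identification $\mathbb{P}T^{*}\mathbb{P}^n\cong\mathcal{I}\subset\mathbb{P}^n\times\check{\mathbb{P}}^n$ of Proposition 1.4.1 of \cite{JV}, the fact that the leaf of $\mathcal{L}_M$ through $p$ is an open piece of the linear hyperplane $H_{t(p)}$ forces the complex hyperplane $T_p^{\mathbb{C}}M_{reg}$ to correspond, under this identification, to $H_{t(p)}$ itself; hence $M'\subset\check\pi^{-1}(S)$ is dense, $\overline{M'}=\check\pi^{-1}(S)$ (here $\check\pi:\mathcal{I}\to\check{\mathbb{P}}^n$ is the second projection), and the germs of $n$-dimensional complex subvarieties containing $M'$ are of the form $\check\pi^{-1}(\widetilde S)$ for germs of complex curves $\widetilde S$ complexifying $S$. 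Because $S$ is irreducible, analytic continuation along $S$ glues these germs; patching as in the proof of Proposition \ref{lift} produces a closed complex-analytic subvariety $Y\subset\mathcal{I}$ of pure complex dimension $n$ with $\overline{M'}\subset Y$, which is algebraic by Chow's theorem, since $\mathcal{I}$ is projective. Setting $C:=\check\pi(Y)$ and using that $Y$ is locally $\check\pi^{-1}(\widetilde S)$, we get that $C\subset\check{\mathbb{P}}^n\cong G(n+1,n)$ is a projective curve with $S\subset C$ and $Y=\check\pi^{-1}(C)$. By the definition of algebraic webs in Section \ref{projective-space}, $Y$ is precisely the variety defining $\mathcal{W}_C$, which by Proposition \ref{degw} is a $d$-web of degree zero with $d=\deg C$; its leaves are the hyperplanes $\{H_t:t\in C\}\supseteq\{H_t:t\in S\}$, so $\mathcal{W}_C$ extends $\mathcal{L}_M$. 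I expect this gluing step to be the main obstacle: one must check that the local complexifications continue across $\sing(M)$ — which is where the non-dicriticity arranged in the first step enters — and that the resulting subvariety is closed, so that Chow's theorem can be invoked.

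It remains to exhibit the global multiple-valued meromorphic first integral. On $W:=\check\pi^{-1}(C)$ the foliation $\mathcal{F}_W$ attached to $\mathcal{W}_C$ is the fibration $\check\pi|_W:W\to C$, because the lift to $W$ of a leaf $H_t$ of $\mathcal{W}_C$ is exactly the fiber $\check\pi^{-1}(t)$. Taking a resolution of singularities $\sigma:\widetilde W\to W$ (needed only over $\sing(C)$), the lifted foliation is still a fibration over the normalization $\widetilde C$, so composing with any rational map $\widetilde C\dashrightarrow\mathbb{P}^1$ gives a meromorphic first integral on each connected component of $\widetilde W$; Proposition \ref{meromor} then yields a global multiple-valued meromorphic first integral for $\mathcal{W}_C$ on $\mathbb{P}^n$. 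Concretely, this recovers the first integral $P([z_0:\cdots:z_n],t)$ given by the defining linear equation of the family $\{H_t\}_t$, exactly as in Example \ref{example2}.
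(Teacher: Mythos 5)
Your overall skeleton is reasonable and two of its pieces do match the paper: the case split at singular points so that Shafikov--Sukhov (Theorem \ref{russians}) applies, and the final step of getting the first integral from the fibered structure $\check\pi|_W:W=\check\pi^{-1}(C)\to C$ together with Proposition \ref{meromor} (the paper is actually terser here and just invokes the first integrals coming from the local webs $\W_p$; your fibration argument is fine). But the central algebraization step in your proposal has a genuine gap, and it is exactly the point you yourself flag as ``the main obstacle'' without resolving it. You assert that the local complexifications $\check\pi^{-1}(\widetilde S)$ of $M'$ continue along the irreducible curve $S$ and glue to a \emph{closed} complex-analytic subvariety $Y\subset\mathcal I$, so that Chow's theorem applies. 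Nothing in what precedes guarantees closedness: analytic continuation of the local complexification of a compact real-analytic curve $S$ produces an immersed complex curve defined only in some neighborhood of $S$ in $\check{\mathbb P}^n$, and a one-dimensional analytic subvariety of an open set need not extend to, or have closure equal to, an analytic subvariety of all of $\check{\mathbb P}^n$ (think of the graph of a lacunary series on an annulus around a circle). Chow's theorem requires a closed analytic subvariety of the whole projective space, so as written the step ``$Y$ is algebraic by Chow'' is not justified. The non-dicriticity you arrange in the first step gives, via Shafikov--Sukhov/Brunella, local $n$-dimensional complexifications at every point of $\overline{M'}$, hence a variety near the compact set $\overline{M'}$ --- but not a globally closed one.

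The paper circumvents precisely this difficulty by a different mechanism: instead of complexifying $S$ upstairs, it uses the Segre-variety parametrization of the local web $\W_p$ (the leaves are the components of $\{F(z,\overline{w(\bar t)})=0\}$ for $t$ in a disc), lifts each individual leaf to a cone $H_t\subset\C^{n+1}$, and shows each such cone is algebraic by combining Remmert--Stein with the homogeneity/Noetherian argument from the proof of Chow's theorem (the ideal of $\overline{H_t}$ is generated by finitely many \emph{homogeneous} polynomials). Elimination of the parameter $t$ by resultants then yields a global $d$-web on $\mathbb P^n$; only \emph{after} observing that its leaves are hyperplanes, hence that the web has degree zero, does Proposition \ref{degw} produce the projective curve $C$ with $\W=\W_C$ and $S\subset C$. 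In other words, in the paper the curve $C$ is an output of the degree-zero characterization of algebraic webs, not an input obtained by complexifying $S$. If you want to keep your route, you must supply an actual proof that the continued complexification of $S$ is a closed subvariety of $\check{\mathbb P}^n$ --- which essentially amounts to redoing the paper's leafwise algebraization in the dual space. Separately, your dicritical-case reduction via the linear projection $\mathbb P^n\dashrightarrow\mathbb P^\ell$ is more complicated than needed and leaves unverified the compatibility of webs under the projection; the paper simply notes that dicriticality forces $M$ to be real-algebraic (Theorem \ref{lebl2}), so case (2) of Theorem \ref{russians} applies directly at such points.
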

\begin{proof}
Let $p\in\sing(M)$. We have two possibilities for $p$ in $M$: either $p$ is a non-dicritical singularity; or $p$ is a dicritical singularity. If $p$ is a dicritical singularity, then $M$ is a real-algebraic hypersurface by Theorem \ref{lebl2}, item $(1)$.
Without loss of generality, we can assume that $p$ is the origin in inhomogeneous coordinates given by $z_0=1$. Let $z=(z_1, \cdots, z_n)$ and denote by $M_0$ the germ of $M$ at $0 \in \C^n$.
Therefore, in a small neighborhood of $p$, $M_0$ satisfies one of the following statements:
\begin{enumerate}
    \item $0 \in \C^n$ is non-dicritical singularity of $M_0$;
    \item $M_0$ is algebraic.
\end{enumerate}
Both cases satisfy the hypotheses of Theorem \ref{russians}, so that we have, in both cases, that there exists a neighborhood $U$ of the origin and a singular holomorphic $d$-web $\mathcal{W}_p$ in $U$ such that $\W_p$ extends the Levi foliation $\mathcal{L}_{M_{0}}$. Furthermore, $\mathcal{W}_p$ admits a multiple-valued meromorphic first integral in $U$.
%\[f(\tilde{z},t)=t^d+a_1(\tilde{z})t^{d-1}+\cdots+a_d(\tilde{z}),\]
%where $a_k$ are germs of holomorphic functions at $(\C^n,0)$, see for instance \cite[pg. 493]{Russians}. 
%\par Let us now work in homogeneous coordinates $z=[z_0:\ldots:z_n]$, then we get a small neighborhood $U$ of $p$ in $\PP^n$ and a holomorphic $d$-web on $U$, also denoted by $\mathcal{W}_p$ tangent to $M\cap U$. Moreover, $\mathcal{W}_p$ admits a multiple-valued meromorphic first integral in $U$
%\[F(z,t)=t^d+A_1(z)t^{d-1}+\cdots+A_d(z),\]
%where $z \in \C^{n+1}-\{0\}$ and $A_k(z)\in\mathcal{O}(U)$. Note that, $\mathcal{W}_p$ is given by the elimination of $t$ in the system of equations $F=\frac{\partial{F}}{\partial{z}}=0$ (i.e., via resultant of $F$ and $\frac{\partial{F}}{\partial{z}})$, and the tangency condition between $\W_p$ and $M$ implies that the set
\par Assume that $M_0$ is defined by 
\[M_0=\{F(z,\bar{z})=0\},\] 
where $F$ is a real-analytic function in $U$.  
According to \cite[Proposition 4.1]{Russians}, there exists a complex line $A\subset\C^n$ such that $A\cap Q_0=\{0\}$, $A\not\subset \sing(M_0)$ and for every $q\in (M_{0})_{reg}$, there exists a point $w\in A$ such that the leaf $L_q$ of $\mathcal{L}_{M_0}$ is contained in $Q_w$. Denote by $\mathbb{D}_{\epsilon}$ the open disk in $\C$ with center $0$ and radius $\epsilon$. Let $\mathbb{D}_{\epsilon}\ni t\to w(\bar{t})=(w_1(\bar{t}),\ldots,w_n(\bar{t}))$, $w(0)=0$, be an anti $\C$-linear parametrization of the complex line $A$, then, by \cite[pg. 492]{Russians}, we obtain that the connected components of the Segre variety 
$$Y=\{(z,t) \in U\times \mathbb{D}_{\epsilon}: F(z,\overline{w(\bar{t})})=0\}$$
are leaves of $\W_p$. 
\par Now, fix $t \in \mathbb{D}_{\epsilon}$, and set $F_t:=\{z \in U: F(z,\overline{w(\bar{t})})=0\}$. Let $\sigma: \C^{n+1}-\{0\} \longrightarrow \mathbb{P}^n$ be the natural projection. We claim that the complex subvarieties $$H_t:=\{Z \in \C^{n+1}:\ Z=0\ \text{or}\ \sigma(Z) \in F_t\}$$ are complex semialgebraic subsets (i.e., every $H_t$ is contained in an algebraic variety). The process to verify this statement is the same as used in the proof of Chow's Theorem, as we will see below. First, we observe that if $Z \in H_t$ then $\lambda Z \in H_t$ for all $\lambda \in \C^*$, hence it follows that $0 \in \overline{H_t}$. By Remmert-Stein Theorem, $\overline{H_t}$ is a complex subvariety of $\C^{n+1}$. Furthermore, since $\mathcal{O}_{n+1}$ is Notherian, the ideal of $\overline{H_t}$ in $\mathcal{O}_{n+1}$, which we denote by $I(\overline{H_t})$, is finitely generated. Assume that $h^{t}_1,\cdots,h^{t}_m$ are generators of $I(\overline{H_t})$, defined in a common open subset $V \subset \C^{n+1}$. For every $k=1,\cdots,m$, consider the Taylor series of $h^{t}_k$ at $0\in\C^{n+1}$
\[h^{t}_k(Z)=\sum_{\mu\geq 0} h^{t}_{k\mu}(Z),\]
where $h^{t}_{k\mu}$ is a homogeneous polynomial of degree $\mu$. Let $$V^{t}_j=\{Z \in V: \ h^{t}_{k\mu}(Z)=0, \; \forall \mu \leq j \; \mbox{and} \; k=1, \cdots,m \}.$$
Clearly $V^{t}_{j+1} \subset V^{t}_j$, for every $j \geq 0$. Thus, we obtain an ascending chain of ideals in $\mathcal{O}_{n+1}$
\[ I(V^{t}_0) \subset \cdots \subset I(V^{t}_j) \subset I(V^{t}_{j+1}) \subset \cdots.\]
Again since $\mathcal{O}_{n+1}$ is Notherian, there is $j_0$ such that $I(V^{t}_j)=I(V^{t}_{j_0})$ for every $j \geq j_0$. Hence, $V^{t}_j=V^{t}_{j_0}$ for every $j \geq j_0$ and 
\[\overline{H_t} \cap U=\{z \in U: \ h^{t}_{k\mu}(z)=0, \; \forall \mu \leq j_0 \; \mbox{and} \; k=1, \cdots,m \}.\]
This concludes that $H_t$ is a complex semialgebraic subset. Now consider the sets
\[W_t=\{Z \in \C^{n+1}: \ h^{t}_{k\mu}(Z)=0, \; \forall \mu \leq j_0, \; \mbox{and} \; k=1, \cdots,m \}.\]
The germ of $W_t$ are equal to the germ of $\overline{H_t}$ at $0\in\C^{n+1}$. Thus, the projections $\sigma(W_t)$ generate a finite family of algebraic subvarieties in $\PP^n$ and, via the resultant (by elimination of the variable $t$), we get a global $d$-web $\mathcal{W}$ in $\PP^n$, whose germ at $p$ coincides with $\W_p$. By construction, this web admits a global multiple-valued meromorphic first integral and it extends the foliation of Levi $\mathcal{L}_M$. Since the leaves of $\mathcal{L}_M$ are hyperplanes in $\PP^n$, it follows that the closure of the leaves of $\mathcal{W}$ are hyperplane too and consequently $\W$ has degree 0. Therefore, $\mathcal{W}$ is algebraic, that is, there exists a projective curve $C \subset \mathbb{P}^n$ of degree $d$ such that $\mathcal{W}=\mathcal{ W}_C$, by Proposition \ref{degw}. Furthermore, since $M$ is induced by the irreducible real analytic curve $S$, we conclude that $S \subset C$. This completes the proof of Theorem \ref{third}.
\end{proof}
\par It is easy to build examples where Theorem \ref{third} applies, for instance, in Example \ref{example2}, the web presented is algebraic and it is tangent to a Levi-flat hypersurface induced by a real-algebraic curve in $\mathbb{P}^1$.

\end{document}